\numberwithin{equation}{section}
\theoremstyle{plain}
\newtheorem{thm}{Theorem}[section]
\newtheorem{prop}[thm]{Proposition}
\newtheorem{cor}[thm]{Corollary}
\theoremstyle{definition}
\newtheorem{defi}[thm]{Definition}
\def\Sp{{\mathcal H}}
\def\U{\mathcal U}
\def\V{\mathcal V}
\def\ind{\mathbb N}
\def\S{\Lambda}
\def\Dom{\mathrm{Dom}}
\def\L{\mathcal L}
\def\Ls{{\mathcal L}^{\ast}}
\def\C{\mathcal C}
\def\F{\mathcal F}
\def\s{\star}
\def\D{\mathcal D}
\def\Y{\mathcal Y}
\begin{document}

\title[Convolution generated by Riesz bases]
{Convolution, Fourier analysis, and distributions generated by Riesz bases}

\author[Michael Ruzhansky]{Michael Ruzhansky}
\address{
  Michael Ruzhansky:
  \endgraf
  Department of Mathematics
  \endgraf
  Imperial College London
  \endgraf
  180 Queen's Gate, London, SW7 2AZ
  \endgraf
  United Kingdom
  \endgraf
  {\it E-mail address} {\rm m.ruzhansky@imperial.ac.uk}
  }
\author[Niyaz Tokmagambetov]{Niyaz Tokmagambetov}
\address{
  Niyaz Tokmagambetov:
  \endgraf
    al--Farabi Kazakh National University
  \endgraf
  71 al--Farabi ave., Almaty, 050040
  \endgraf
  Kazakhstan,
  \endgraf
   and
  \endgraf
    Department of Mathematics
  \endgraf
  Imperial College London
  \endgraf
  180 Queen's Gate, London, SW7 2AZ
  \endgraf
  United Kingdom
  \endgraf
  {\it E-mail address} {\rm n.tokmagambetov@imperial.ac.uk}
 }

\thanks{The authors were supported in parts by the EPSRC
grants EP/K039407/1 and EP/R003025/1, and by the Leverhulme Grants RPG-2014-02 and RPG-2017-151,
as well as by the MESRK grant 0773/GF4. No new data was collected or generated during the course of research.}

\date{\today}

\subjclass{42A85, 44A35.} \keywords{convolution, basis, biorthogonal system, Fourier analysis, Hilbert space}

\begin{abstract}
In this note we discuss notions of convolutions generated by biorthogonal systems of elements of a Hilbert space.  We develop the associated biorthogonal Fourier analysis and the theory of distributions, discuss properties of convolutions and give a number of examples.
%In this paper we introduce a `new' point of analysis's start, developing a calculus associated with given system of functions and number sequence.
\end{abstract}

\maketitle

\section{Introduction}

%In this note we introduce an analysis generated with the given system of functions. One should be chosen a set of the Hilbert spaces's elements. Actually, we are going to do calculus in the Hilbert space $\Sp$. Denote by $\U:=\{u_{\xi}|\,\, u_{\xi}\in\Sp\}_{\xi\in\ind}$ and $\V:=\{v_{\xi}|\,\, v_{\xi}\in\Sp\}_{\xi\in\ind}$. During the research we assume that the system of functions $\U$ is a basis of the space $\Sp$ and the system $\V$ is biorthogonal to $\U$ in $\Sp$ sense. From the Bari's work \cite{bari} follows the system of functions $\V$ is also basis in $\Sp$. Also, we need to choose a sequence and $\S:=\{\lambda_{\xi}\}_{\xi\in\ind}$ of complex numbers.

In this work we introduce a notion of a convolution generated by systems of elements of a Hilbert space $\Sp$ forming a Riesz basis.
%A typical example is when the basis is given by a complete system of eigenfunctions of a densely defined operator in  $\Sp$.

Such collections often arise as systems of eigenfunctions of densely defined non-self-asjoint operators acting on $\Sp$, and a suitable notion of convolution also leads to the development of the associated Fourier analysis. In the case of the eigenfunctions having no zeros the corresponding global theory of pseudo-differential operators has been recently developed in
\cite{RT16}. The assumption on eigenfunctions having no zeros has been subsequently removed in \cite{RT16a}, and some applications of such analysis to the wave equation for the Landau Hamiltonian were carried out in \cite{RT16b}, as well as for general operators with discrete spectrum in \cite{RT17c}. The analysis in these papers relied on the spectral properties of a fixed operator acting in $\Sp=L^{2}(M)$ for a smooth manifold $M$ with or without boundary.

In this note we aim at discussing an abstract point of view on convolutions when one is given only a Riesz basis in a Hilbert space, without making additional assumptions on an operator for which it may be a basis of eigenfunctions. Such an abstract point of view has a number of advantages, for example, the questions of whether the basis elements (for example in $\Sp=L^{2}(M)$) have zeros at some points, become irrelevant.

More specifically, let $\Sp$ be a separable Hilbert space, and denote by
$$
\U:=\{u_{\xi}|\,\, u_{\xi}\in\Sp\}_{\xi\in\ind}
$$
and
$$
\V:=\{v_{\xi}|\,\, v_{\xi}\in\Sp\}_{\xi\in\ind}
$$
collections of elements of $\Sp$ parametrised by a discrete set $\ind$.
We assume that the system $\U$ is a Riesz basis of the space $\Sp$ and the system $\V$ is biorthogonal to $\U$ in $\Sp$, i.e. we have the property that
$$
(u_{\xi},v_{\eta})_{\Sp}=\delta_{\xi\eta},
$$
where $\delta_{\xi\eta}$ is the Kronecker delta, equal to $1$ for $\xi=\eta$, and to $0$ otherwise.
Then from the classical Bari's work \cite{bari} (see also Gelfand \cite{Gel51}) it follows that the system $\V$ is also basis in $\Sp$. The Riesz basis is characterised by the property that it is the image of an orthonormal basis in $\Sp$ under a linear invertible transformation. However, since our aim is to subsequently extend the present constructions in the future beyond the Riesz basis setting we will try not to make explicit use of this property. The results of this paper have been announced in \cite{KRT17}.

The setting of Riesz bases has numerous applications to different problems, see e.g.
\cite{Chr01, Chr16}, and in different settings and modifications, see e.g.  \cite{BT16, GM06, GP11}, to mention only very few. Decomposition systems of different types and the subsequent function spaces is also an active area of research, see e.g. \cite{K03,BH06,B07,GJN17}.

\smallskip
In this paper we define
$\U$-- and $\V$--convolutions in the following form:
\begin{equation}\label{EQ:c1}
f\s_{\U}g:= \sum_{\xi\in\ind}(f, v_\xi) (g, v_\xi) u_{\xi}
\end{equation}
and
\begin{equation}\label{EQ:c2}
h\s_{\V}j:= \sum_{\xi\in\ind}(h, u_\xi) (j, u_\xi) v_{\xi}
\end{equation}
for appropriate elements $f,g,h,j\in \Sp$. These convolutions are clearly commutative and associative, and have a number of properties expected from convolutions, most importantly, they are mapped to the product by the naturally defined Fourier transforms associated to $\U$ and $\V$.

Without going too much into detail, let us briefly summarise the results of this paper:

\begin{itemize}
\item The naturally defined Fourier transforms in $\Sp$ map convolutions \eqref{EQ:c1} and \eqref{EQ:c2} to the product of Fourier transforms. For example,
defining $\widehat{f}(\xi):=(f, v_{\xi})$, we have $\widehat{f\s_{\U} g}=\widehat{f}\,\widehat{g}.$
Moreover, conversely, if a bilinear mapping $K:\Sp\times\Sp\to\Sp$ satisfies $\widehat{K(f,g)}=\widehat{f}\,\widehat{g}$, it must be  given by \eqref{EQ:c1}.
\item Although the bases $\U$ and $\V$ do not have to be orthogonal, there is a Hilbert space $l^{2}_{\U}$ such that we have the Plancherel identity $(f,g)_{\Sp}=(\widehat{f},\widehat{g})_{l^{2}_{\U}}$.
\item We discuss more general families of spaces $l^{p}_{\U}$, $1\leq p\leq\infty$, on the Fourier transform side giving rise to further Fourier analysis in $\Sp$. Namely, these spaces satisfy analogues of the usual duality and interpolation relations, as well as the Hausdorff-Young inequalities with the corresponding family of subspaces of $\Sp$.
\item The developed biorthogonal Fourier analysis can be embedded in an appropriate theory of distributions realised in suitable rigged Hilbert spaces
$(\Phi_{\U}, \Sp, \Phi_{\U}')$ and $(\Phi_{\V}, \Sp, \Phi_{\V}')$, with $\Phi_{\U}:=\C^{\infty}_{\U, \S}$, $\Phi_{\U}':=\mathcal D'_{\V, \S}$ and $\Phi_{\V}:=\C^{\infty}_{\V, \S}$, $\Phi_{\V}':=\mathcal D'_{\U, \S}$, associated to a fixed spectral set $\Lambda$ satisfying certain natural properties. These triples allow us to extend the notions of $\U$-- and $\V$--convolutions and $\U$-- and $\V$--Fourier transforms beyond the Hilbert space $\Sp$.
\item We show how these constructions are related to the spectral decompositions of linear operators in $\Sp$. In particular, we relate the convolutions to the formulae for their resolvents.
\item We discuss several examples and several further possible notions of convolutions.
\end{itemize}

Let us conclude the introduction by giving a concrete example of such convolution also relating it to the spectral analysis.
Let us consider the operator $\L:\Sp\to\Sp$ on the interval $(0, 1)$ given by
$$
\L:= -i\frac{d}{d x},
$$
and let us equip this operator with boundary condition $h y(0)=y(1)$ for some $h>0$.
The operator $\L$ is not self-adjoint on $\Sp=L^{2}(0,1)$ for  $h\not=1$.
The spectral properties of $\L$ have been thoroughly investigated by e.g. Titchmarsh \cite{titc} and Cartwright \cite{cart}. In particular, it is known that the collections
\begin{equation}\label{EQ:u1}
\U=\{u_{j}(x)=h^{x}e^{ 2\pi i x j },\,\, j\in \mathbb{Z}\}
\end{equation}
and
\begin{equation}\label{EQ:v1}
\V=\{v_{j}(x)=h^{-x} e^{2\pi i x j },\,\, j\in \mathbb{Z}\}
\end{equation}
are the systems of eigenfunctions of $\L$ and $\L^{*}$, respectively, and form
Riesz bases in $\Sp=L^{2}(0, 1).$
In this case the abstract definition of convolution above can be shown (see Proposition \ref{prop:Kc}) to yield the concrete expression
$$
(f\star_{\U}
g)(x)=\int^{x}_{0}f(x-t)g(t)dt+\frac{1}{h}\int^{1}_{x}f(1+x-t)g(t)dt,
$$
which coincides with the usual convolution for $h=1$, in which case also $\U=\V$ is an orthonormal basis in $\Sp$. Of course, in this example, the main interest for us is the case $h\not=1$ corresponding to biorthogonal bases $\U$ and $\V$ in \eqref{EQ:u1} and \eqref{EQ:v1}, respectively.

In this paper, to avoid any confusion, we will be using the notation $\mathbb N_{0}=\mathbb N\cup\{0\}.$

\section{Biorthogonal convolutions}

In this section we describe the functional analytic setting for investigating convolutions \eqref{EQ:c1} and \eqref{EQ:c2}.
Let us take biorthogonal systems
$$
\U:=\{u_{\xi}|\,\, u_{\xi}\in\Sp\}_{\xi\in\ind}
$$
and
$$
\V:=\{v_{\xi}|\,\, v_{\xi}\in\Sp\}_{\xi\in\ind}
$$
in a separable Hilbert space $\Sp$, where $\ind$ is a discrete set. We assume that $\U$ (and hence also $\V$) is a Riesz basis in $\Sp$, i.e. any element of $\Sp$ has a unique decomposition with respect of the elements of $\U$. We note that the basis collections are uniformly bounded in $\Sp$.

%Without loss of generality, rescaling them if necessary, we can assume that they are uniformly bounded in $\Sp$. {\bf\Large is this true?}

Before we proceed with describing a version of the biorthogonal Fourier analysis, let us show that expressions in \eqref{EQ:c1} and \eqref{EQ:c2} are usually well-defined.

\begin{prop}\label{PROP:wd}
Let $f\s_{\U}g$ and $h\s_{\V}j$ be defined by \eqref{EQ:c1} and \eqref{EQ:c2}, respectively, that is,
\begin{equation}\label{EQ:c11}
f\s_{\U}g:= \sum_{\xi\in\ind}(f, v_\xi) (g, v_\xi) u_{\xi}
\end{equation}
and
\begin{equation}\label{EQ:c21}
h\s_{\V}j:= \sum_{\xi\in\ind}(h, u_\xi) (j, u_\xi) v_{\xi}.
\end{equation}
%Suppose that the families $\U$ and $\V$ are uniformly bounded in $\Sp$.
Then there exists a constant $M>0$ such that we have
\begin{equation}\label{EQ:est1}
\|f\s_{\U}g\|_{\Sp}\leq M\sup_{\xi\in\ind}\|u_{\xi}\|_{\Sp}\|f\|_{\Sp}\|g\|_{\Sp},\quad
\|h\s_{\V}j\|_{\Sp}\leq M\sup_{\xi\in\ind}\|v_{\xi}\|_{\Sp}\|h\|_{\Sp}\|j\|_{\Sp},
\end{equation}
for all $f,g,h,j\in\Sp.$
\end{prop}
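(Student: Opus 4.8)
The plan is to deduce both inequalities in \eqref{EQ:est1} from the fact that a Riesz basis is, in particular, a Bessel sequence, combined with the triangle and Cauchy--Schwarz inequalities. Since $\U$ is a Riesz basis in $\Sp$ and $\V$ is its biorthogonal (dual) system, both $\U$ and $\V$ are Bessel sequences; hence there is a constant $M>0$ such that
\[
\sum_{\xi\in\ind}|(f,v_\xi)|^2\leq M\|f\|_\Sp^2
\qquad\text{and}\qquad
\sum_{\xi\in\ind}|(h,u_\xi)|^2\leq M\|h\|_\Sp^2
\]
for every $f,h\in\Sp$. I would take this $M$ (the larger of the two Bessel bounds) as the constant appearing in the statement.

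To prove the first estimate, I would start from the defining series \eqref{EQ:c11} and apply the triangle inequality, pulling out the supremum of $\|u_\xi\|_\Sp$:
\[
\|f\s_{\U}g\|_\Sp
\leq \sum_{\xi\in\ind}|(f,v_\xi)|\,|(g,v_\xi)|\,\|u_\xi\|_\Sp
\leq \Big(\sup_{\xi\in\ind}\|u_\xi\|_\Sp\Big)\sum_{\xi\in\ind}|(f,v_\xi)|\,|(g,v_\xi)|.
\]
The remaining sum I would estimate by Cauchy--Schwarz in $\ell^2(\ind)$, obtaining
\[
\sum_{\xi\in\ind}|(f,v_\xi)|\,|(g,v_\xi)|
\leq\Big(\sum_{\xi\in\ind}|(f,v_\xi)|^2\Big)^{1/2}\Big(\sum_{\xi\in\ind}|(g,v_\xi)|^2\Big)^{1/2}
\leq M\|f\|_\Sp\|g\|_\Sp,
\]
where the last inequality uses the Bessel bound twice. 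Combining the two displays yields the first inequality in \eqref{EQ:est1}. The second inequality follows by the identical argument with the roles of $\U$ and $\V$ interchanged, using the Bessel bound for $\U$ and the series \eqref{EQ:c21}.

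Finally, the same chain of estimates justifies that the convolutions are well defined: since the bound above shows $\sum_{\xi\in\ind}|(f,v_\xi)|\,|(g,v_\xi)|\,\|u_\xi\|_\Sp<\infty$, the series in \eqref{EQ:c11} converges absolutely in the norm of $\Sp$ and therefore defines an element of $\Sp$; likewise for \eqref{EQ:c21}. I expect no serious obstacle here: the only points requiring care are the extraction of a single Bessel constant $M$ valid for both systems from the Riesz basis hypothesis, and the observation that the crude norm estimate $\|\sum_{\xi\in\ind}c_\xi u_\xi\|_\Sp\leq\sum_{\xi\in\ind}|c_\xi|\,\|u_\xi\|_\Sp$ is exactly what produces the factor $\sup_{\xi\in\ind}\|u_\xi\|_\Sp$ demanded by the statement, rather than the sharper $\ell^2$ bound coming from the upper Riesz constant.
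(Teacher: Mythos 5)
Your proof is correct and follows essentially the same route as the paper: both arguments combine the triangle inequality, the Cauchy--Schwarz inequality, and the Bessel (frame) bounds $\sum_{\xi\in\ind}|(f,v_\xi)|^2\leq M\|f\|_{\Sp}^2$, $\sum_{\xi\in\ind}|(h,u_\xi)|^2\leq M\|h\|_{\Sp}^2$, which the paper draws from Bari's theorem on Riesz bases, together with the uniform boundedness $\sup_{\xi\in\ind}\|u_{\xi}\|_{\Sp}+\sup_{\xi\in\ind}\|v_{\xi}\|_{\Sp}<\infty$ that you use implicitly when concluding absolute convergence. The only cosmetic difference is that the paper leaves the assembly of these ingredients as a sketch, whereas you write out the chain of estimates explicitly.
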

The statement follows from the Cauchy-Schwarz inequality and the following fact:
since systems of  $u_\xi$ and of $v_\xi$ are Riesz bases in $\Sp$, from \cite[Theorem 9]{bari} we have that 
there are constants $a,A,b,B>0$ such that for arbitrary $g\in\Sp$
we obtain
\begin{equation}\label{LEM: FTl2}
a^2\|g\|_{\Sp}^2 \leq \sum_{\xi\in\ind} |(g,v_{\xi})|^2\leq A^2\|g\|_{\Sp}^2 \,\,\,\,\,\,  \hbox{and} \,\,\,\,\,\,  b^2\|g\|_{\Sp}^2 \leq \sum_{\xi\in\ind} |(g,u_{\xi})|^2\leq B^2\|g\|_{\Sp}^2.
\end{equation} 
This amounts to simply stating that the Riesz basis collections form collections of frames in $\Sp$.
From the Riesz basis property it also follows that the families $\U$ and $\V$ are uniformly bounded in $\Sp$, that is,
$$
\sup_{\xi\in\ind}\|u_{\xi}\|_{\Sp}+\sup_{\xi\in\ind}\|v_{\xi}\|_{\Sp}<\infty.
$$

Let us introduce $\U$-- and $\V$--Fourier transforms by formulas
\begin{equation}\label{EQ: FT_u}
\F_{\U}(f)(\xi):=(f, v_{\xi})=:\widehat{f}(\xi)
\end{equation}
and
\begin{equation}\label{EQ: FT_v}
\F_{\V}(g)(\xi):=(g, u_{\xi})=:\widehat{g}_{\ast}(\xi),
\end{equation}
respectively, for all $f, g\in\Sp$ and for each $\xi\in\ind$. Here $\widehat{g}_{\ast}$ stands for the $\V$--Fourier transform of the function $g$. Indeed, in general $\widehat{g}_{\ast}\neq\widehat{g}$.
Their inverses are given by
\begin{equation}\label{EQ: FT_ui}
(\F_{\U}^{-1}a)(x):=\sum_{\xi\in\ind} a(\xi)u_{\xi}
\end{equation}
and
\begin{equation}\label{EQ: FT_vi}
(\F_{\V}^{-1}a)(x):=\sum_{\xi\in\ind} a(\xi)v_{\xi}.
\end{equation}

The Fourier transforms defined in \eqref{EQ: FT_u} and \eqref{EQ: FT_v} are the analysis operators, and, the inverse transforms \eqref{EQ: FT_ui} and \eqref{EQ: FT_vi} are the corresponding synthesis operators, see e.g. \cite{K03}. For more information, see e.g. \cite{BH06, B07, GJN17} and references therein. 

There is a straightforward relation between $\U$- and $\V$-convolutions, and the Fourier transforms:

\begin{thm}\label{PR: ConvProp}
For arbitrary $f, g, h, j\in\Sp$ we have
$$
\widehat{f\s_{\U} g}=\widehat{f}\,\widehat{g}, \,\,\,\, \widehat{h\s_{\V} j}_{\ast}=\widehat{h}_{\ast}\,\widehat{j}_{\ast}.
$$

Therefore, the convolutions are commutative and associative.

Let $K:\Sp\times\Sp\to\Sp$ be a bilinear mapping. If for all $f,g\in\Sp$, the form $K(f,g)$ satisfies the property
\begin{equation}\label{EQ: Bilinear op-2}
\widehat{K(f, g)}=\widehat{f} \,\widehat{g}
\end{equation}
then $K$ is the $\U$--convolution, i.e. $K(f,g)=f*_{\U}g$.

Similarly, if $K(f,g)$ satisfies the property
\begin{equation}\label{EQ: Bilinear op-22}
\widehat{K(f, g)}_{*}=\widehat{f}_{*} \,\widehat{g}_{*}
\end{equation}
then $K$ is the $\V$--convolution, i.e. $K(f,g)=f*_{\V}g$.
\end{thm}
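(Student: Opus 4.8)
The plan is to verify the three claims in turn, each by a direct computation on the Fourier side using the biorthogonality relation $(u_\xi, v_\eta)_{\Sp} = \delta_{\xi\eta}$.

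First I would establish the product formula $\widehat{f \s_{\U} g} = \widehat f \, \widehat g$. By definition $f \s_{\U} g = \sum_{\xi} (f,v_\xi)(g,v_\xi) u_\xi$, so applying the $\U$--Fourier transform at a point $\eta \in \ind$ gives
\begin{equation*}
\widehat{f \s_{\U} g}(\eta) = \Bigl( \sum_{\xi} (f,v_\xi)(g,v_\xi) u_\xi, \, v_\eta \Bigr)_{\Sp}.
\end{equation*}
Pulling the inner product inside the sum (justified by Proposition \ref{PROP:wd}, which guarantees norm convergence, together with continuity of $(\cdot,v_\eta)_{\Sp}$) and using $(u_\xi, v_\eta)_{\Sp} = \delta_{\xi\eta}$ collapses the sum to the single term $(f,v_\eta)(g,v_\eta) = \widehat f(\eta)\,\widehat g(\eta)$. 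The identity $\widehat{h \s_{\V} j}_* = \widehat h_* \, \widehat j_*$ is proved by the symmetric computation, now testing against $u_\eta$ and using $(v_\xi,u_\eta)_{\Sp} = \overline{(u_\eta,v_\xi)_{\Sp}} = \delta_{\xi\eta}$. Commutativity and associativity then follow formally: since $\F_{\U}$ is injective (as $\U$ is a basis, an element is determined by its coefficients $(f,v_\xi)$), the product formula transports the commutativity and associativity of pointwise multiplication of scalar sequences back to the convolution.

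For the converse (uniqueness) statement, suppose $K:\Sp\times\Sp\to\Sp$ is bilinear with $\widehat{K(f,g)} = \widehat f \, \widehat g$ for all $f,g$. The key point is that $K(f,g)$ and $f \s_{\U} g$ have the same $\U$--Fourier transform, namely $\widehat f \, \widehat g$, for every $f,g \in \Sp$. Since $\F_{\U}$ is injective on $\Sp$—an element $w \in \Sp$ with $\widehat w(\xi) = (w,v_\xi) = 0$ for all $\xi$ must vanish, because $\V$ is a basis and hence total in $\Sp$—we conclude $K(f,g) = f \s_{\U} g$ for all $f,g$. The second converse, with \eqref{EQ: Bilinear op-22}, is identical upon exchanging the roles of $\U$ and $\V$, using that $\U$ is total so that $\F_{\V}$ is injective.

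The main thing requiring care, rather than a genuine obstacle, is the justification for interchanging the inner product with the infinite sum in the product formula, and the injectivity of $\F_{\U}$ and $\F_{\V}$. The interchange is legitimate because the series defining the convolution converges in the $\Sp$--norm by the estimate \eqref{EQ:est1} of Proposition \ref{PROP:wd}, and the linear functional $(\,\cdot\,,v_\eta)_{\Sp}$ is continuous; injectivity of the Fourier transforms is exactly the totality of the biorthogonal systems, which holds since $\U$ and $\V$ are bases. Once these two points are in place, every claim in the theorem reduces to the single line $(u_\xi,v_\eta)_{\Sp} = \delta_{\xi\eta}$, so I expect no substantive difficulty beyond bookkeeping.
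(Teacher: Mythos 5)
Your proposal is correct and follows essentially the same route as the paper: the product formula by termwise application of $(u_\xi,v_\eta)_{\Sp}=\delta_{\xi\eta}$, commutativity and associativity transported through the Fourier transform, and the uniqueness of $K$ via bijectivity of $\F_{\U}$ (the paper phrases this by recovering $K(f,g)$ from the inversion formula $K(f,g)=\sum_{\xi}\widehat{K(f,g)}(\xi)u_{\xi}$, which is the same fact you invoke as injectivity plus totality of $\V$). Your explicit justification of the sum--inner-product interchange via Proposition \ref{PROP:wd} is a point the paper leaves implicit, but it is not a departure in method.
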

%Property \ref{ConvProp} is proving in the same way as analogues statement from the work \cite{RT16}. For more clarity, we are giving it below:
\begin{proof}
Direct calculations yield
\begin{align*}
\mathcal F_{\U}(f\s_{\U}
g)(\xi)&=(\sum_{\eta\in\ind}
\widehat{f}(\eta)\widehat{g}(\eta)u_{\eta}, v_{\xi})
\\
&=\sum_{\eta\in\ind}
\widehat{f}(\eta)\widehat{g}(\eta)(u_{\eta}, v_{\xi})
\\
&=\widehat{f}(\xi)\widehat{g}(\xi).
\end{align*}
Commutativity follows from the bijectivity of the $\U$--Fourier transform, also implying the associativity. This can be also seen from the definition:
\begin{align*}
((f\s_{\U} g) \s_{\U} h) & = \sum_{\xi\in\ind}(\sum_{\eta\in\ind}
\widehat{f}(\eta)\widehat{g}(\eta)u_{\eta}, v_{\xi})\widehat{h}(\xi)u_{\xi}
\\
&=\sum_{\xi\in\ind}\widehat{f}(\xi)\widehat{g}(\xi)\widehat{h}(\xi)u_{\xi}
\\
&=\sum_{\xi\in\ind}\widehat{f}(\xi)\left[\sum_{\eta\in\ind}\widehat{g}(\eta)\widehat{h}(\eta)(u_{\eta}, v_{\xi})\right]u_{\xi}
\\
&=\sum_{\xi\in\ind}\widehat{f}(\xi)(\sum_{\eta\in\ind}
\widehat{g}(\eta)\widehat{h}(\eta)u_{\eta}, v_{\xi})u_{\xi}
\\
&=(f\s_{\U} (g \s_{\U} h)).
\end{align*}
Next, it is enough to prove that $K$ is the $\U$-convolution under the assumption \eqref{EQ: Bilinear op-2}. The similar property for $\V$-convolutions under assumption \eqref{EQ: Bilinear op-22} follows by simply replacing $\U$ by $\V$ in the part concerning $\U$-convolutions.

Since for arbitrary $f,g\in\Sp$ and for $K(f,g)\in\Sp$ the property \eqref{EQ: Bilinear op-2} is valid then we can regain $K(f,g)$ from the inverse $\U$--Fourier formula
$$
K(f,g)=\sum_{\xi\in\ind}\widehat{K(f, g)}(\xi)u_{\xi}=\sum_{\xi\in\ind}\widehat{f}(\xi) \,\widehat{g}(\xi) u_{\xi}.
$$
The last expression defines the $\U$--convolution.
\end{proof}

\section{Biorthogonal Fourier analysis}
\label{SEC:FA}

From \eqref{LEM: FTl2} we can conclude that the $\U$-- and $\V$--Fourier coefficients of the elements of $\Sp$ belong to the space of square-summable sequences $l^{2}(\ind)$.
However, we note that the Plancherel identity is also valid for suitably
defined $l^2$-spaces of Fourier coefficients, see \cite[Proposition 6.1]{RT16}.
We explain it now in the present setting.

Indeed, the frame property in \eqref{LEM: FTl2} can be improved to the exact Plancherel formula with a suitable choice of norms.

\subsection{Plancherel formula}

Let us denote by $$l^{2}_{\U}=l^2(\U)$$
the linear space of complex-valued functions $a$
on $\ind$ such that $\mathcal F^{-1}_{\U}a\in
\Sp$, i.e. if there exists $f\in \Sp$ such that $\mathcal F_{\U}f=a$.
Then the space of sequences $l^{2}_{\U}$ is a
Hilbert space with the inner product
\begin{equation}\label{EQ: InnerProd SpSeq-s}
(a,\ b)_{l^{2}_{\U}}:=\sum_{\xi\in\ind}a(\xi)\ \overline{(\mathcal F_{\V}\circ\mathcal F^{-1}_{\U}b)(\xi)},
\end{equation}
for arbitrary $a,\,b\in l^{2}_{\U}$.
The reason for this choice of the definition is the following formal calculation:
\begin{align}\label{EQ:PL-prelim} \nonumber
(a,\ b)_{l^{2}_{\U}}&
=\sum_{\xi\in\ind}a(\xi)\ \overline{(\mathcal F_{\V}\circ\mathcal F^{-1}_{\U}b)(\xi)}\\ \nonumber
&=\sum\limits_{\xi\in\ind
}a(\xi)\overline{\left(\mathcal F^{-1}_{\U}b, u_{\xi}\right)}\\ \nonumber
&=\left(\left[\sum\limits_{\xi\in\ind}a(\xi)u_{\xi}\right], \mathcal F^{-1}_{\U}b\right)\\ \nonumber
&=(\mathcal F^{-1}_{\U}a,\,\mathcal F^{-1}_{\U}b),
\end{align}
which implies the Hilbert space properties of the space of sequences
$l^{2}_{\U}$. The norm of $l^{2}_{\U}$ is then given by the
formula
\begin{equation}\label{EQ:l2norm}
\|a\|_{l^{2}_{\U}}=\left(\sum_{\xi\in\ind}a(\xi)\
\overline{(\mathcal F_{\V}\circ\mathcal F^{-1}_{\U}a)(\xi)}\right)^{1/2}, \quad \textrm{ for all } \; a\in l^{2}_{\U}.
\end{equation}
We note that individual terms in this sum may be complex-valued but the
whole sum is real and non-negative.

Analogously, we introduce the
Hilbert space $$l^{2}_{\V}=l^{2}(\V)$$
as the space of functions $a$ on $\ind$
such that $\mathcal F^{-1}_{\V}a\in \Sp$,
with the inner product
\begin{equation}
\label{EQ: InnerProd SpSeq-s_2} (a,\ b)_{l^{2}_{\V}}:=\sum_{\xi\in\ind}a(\xi)\ \overline{(\mathcal
F_{\U}\circ\mathcal F^{-1}_{\V}b)(\xi)},
\end{equation}
for arbitrary $a,\,b\in l^{2}_{\V}$. The norm of
$l^{2}_{\V}$ is given by the formula
$$
\|a\|_{l^{2}_{\V}}=\left(\sum_{\xi\in\ind}a(\xi)\
\overline{(\mathcal F_{\U}\circ\mathcal F^{-1}_{\V}a)(\xi)}\right)^{1/2}
$$
for all $a\in l^{2}_{\V}$. The spaces of sequences
$l^{2}_{\U}$ and
$l^{2}_{\V}$ are thus generated by biorthogonal systems
$\{u_{\xi}\}_{\xi\in\ind}$ and $\{v_{\xi}\}_{\xi\in\ind}$.

Since Riesz bases are equivalent to an orthonormal basis by an invertible linear transformation, we have the equality between the spaces $l^{2}_{\U}=l^{2}_{\V}=l^{2}(\mathbb N)$ as sets; of course the special choice of their norms is the important ingredient in their definition.

Indeed, the reason for their definition in the above forms becomes clear again
in view of the following Plancherel identity:

\begin{thm} {\rm(Plancherel's identity)}\label{PlanchId}
If $f,\,g\in \Sp$ then
$\widehat{f},\,\widehat{g}\in l^{2}_{\U}, \,\,\,
\widehat{f}_{\ast},\, \widehat{g}_{\ast}\in l^{2}_{\V}$, and the inner products {\rm(\ref{EQ: InnerProd SpSeq-s}),
(\ref{EQ: InnerProd SpSeq-s_2})} take the form
$$
(\widehat{f},\ \widehat{g})_{l^{2}_{\U}}=\sum_{\xi\in\ind}\widehat{f}(\xi)\ \overline{\widehat{g}_{\ast}(\xi)}
$$
and
$$
(\widehat{f}_{\ast},\ \widehat{g}_{\ast})_{l^{2}_{\V}}=\sum_{\xi\in\ind}\widehat{f}_{\ast}(\xi)\
\overline{\widehat{g}(\xi)},
$$
respectively. In particular, we have
$$
\overline{(\widehat{f},\ \widehat{g})_{l^{2}_{\U}}}=
(\widehat{g}_{\ast},\ \widehat{f}_{\ast})_{l^{2}_{\V}}.
$$
The Parseval identity takes the form
\begin{equation}\label{Parseval}
(f,g)_{\Sp}=(\widehat{f},\widehat{g})_{l^{2}_{\U}}=\sum_{\xi\in\ind}\widehat{f}(\xi)\ \overline{\widehat{g}_{\ast}(\xi)}.
\end{equation}
Furthermore, for any $f\in \Sp$, we have
$\widehat{f}\in l^{2}_{\U}$, $\widehat{f}_{\ast}\in l^{2}_{\V}$, and
\begin{equation}
\label{Planch} \|f\|_{\Sp}=\|\widehat{f}\|_{l^{2}_{\U}}=\|\widehat{f}_{\ast}\|_{l^{2}_{\V}}.
\end{equation}
\end{thm}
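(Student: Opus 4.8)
The plan is to reduce every assertion to the biorthogonal expansion of an element of $\Sp$ in the Riesz basis $\U$ (respectively $\V$), the essential point being that the inner products on $l^{2}_{\U}$ and $l^{2}_{\V}$ were defined in \eqref{EQ: InnerProd SpSeq-s} and \eqref{EQ: InnerProd SpSeq-s_2} precisely so as to encode this expansion. First I would dispose of the membership claims. For $f\in\Sp$ one has $\widehat{f}=\F_{\U}f$ by \eqref{EQ: FT_u}, so $\F^{-1}_{\U}\widehat{f}$ is defined and, by the Riesz basis expansion $f=\sum_{\xi\in\ind}(f,v_{\xi})u_{\xi}=\sum_{\xi\in\ind}\widehat{f}(\xi)u_{\xi}$, equals $f\in\Sp$; this is exactly the condition for $\widehat{f}\in l^{2}_{\U}$. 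Symmetrically $\widehat{f}_{\ast}=\F_{\V}f$ together with the dual expansion $f=\sum_{\xi\in\ind}(f,u_{\xi})v_{\xi}$ gives $\widehat{f}_{\ast}\in l^{2}_{\V}$.

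The two displayed formulas for the inner products are then immediate from the definitions. Substituting $a=\widehat{f}$, $b=\widehat{g}$ into \eqref{EQ: InnerProd SpSeq-s} and using $\F^{-1}_{\U}\widehat{g}=g$ turns the factor $(\F_{\V}\circ\F^{-1}_{\U}\widehat{g})(\xi)$ into $(\F_{\V}g)(\xi)=\widehat{g}_{\ast}(\xi)$, which is the first identity; exchanging the roles of $\U$ and $\V$ in \eqref{EQ: InnerProd SpSeq-s_2} yields the second. The conjugation relation $\overline{(\widehat{f},\widehat{g})_{l^{2}_{\U}}}=(\widehat{g}_{\ast},\widehat{f}_{\ast})_{l^{2}_{\V}}$ then follows by conjugating the resulting series term by term and comparing with the second formula.

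For the Parseval identity I would pair the expansion $f=\sum_{\xi\in\ind}\widehat{f}(\xi)u_{\xi}$ against $g$: by continuity (and linearity in the first slot) of the inner product, $(f,g)_{\Sp}=\sum_{\xi\in\ind}\widehat{f}(\xi)(u_{\xi},g)_{\Sp}=\sum_{\xi\in\ind}\widehat{f}(\xi)\overline{\widehat{g}_{\ast}(\xi)}$, which is precisely the series just identified with $(\widehat{f},\widehat{g})_{l^{2}_{\U}}$; hence $(f,g)_{\Sp}=(\widehat{f},\widehat{g})_{l^{2}_{\U}}$. The Plancherel equalities \eqref{Planch} are then the diagonal case $g=f$, with the $l^{2}_{\V}$ statement coming from the analogous computation based on the expansion of $f$ in $\V$.

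The only genuine point requiring care — and the step I expect to be the main, albeit mild, obstacle — is the convergence of these series and the legitimacy of passing the functional $(\,\cdot\,,g)_{\Sp}$ through the infinite sum. Here I would invoke the frame bounds \eqref{LEM: FTl2}, which guarantee $\widehat{f},\widehat{g}_{\ast}\in l^{2}(\ind)$, so that $\sum_{\xi\in\ind}\widehat{f}(\xi)\overline{\widehat{g}_{\ast}(\xi)}$ converges absolutely by the Cauchy--Schwarz inequality, together with the unconditional convergence in $\Sp$ of the Riesz basis expansion, which justifies interchanging the continuous linear functional with the sum.
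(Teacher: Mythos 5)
Your proof is correct and takes essentially the same route as the paper's: you identify $(\F_{\V}\circ\F^{-1}_{\U}\widehat{g})(\xi)=\widehat{g}_{\ast}(\xi)$ directly from the definitions of the inner products \eqref{EQ: InnerProd SpSeq-s}, \eqref{EQ: InnerProd SpSeq-s_2}, and then derive Parseval and \eqref{Planch} from the Riesz basis expansions. The only cosmetic difference is that the paper expands both $f$ and $g$ (in $\U$ and $\V$ respectively) and invokes biorthogonality $(u_{\xi},v_{\eta})=\delta_{\xi\eta}$, whereas you expand only $f$ and use $(u_{\xi},g)_{\Sp}=\overline{\widehat{g}_{\ast}(\xi)}$ directly; your explicit justification of convergence via the frame bounds \eqref{LEM: FTl2} is a point the paper leaves implicit.
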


\begin{proof}
By the definition we get
\begin{align*}
(\mathcal F_{\V}\circ\mathcal F^{-1}_{\U}\widehat{g})(\xi)=\left(\mathcal
F_{\V}g\right)(\xi)=\widehat{g}_{\ast}(\xi)
\end{align*}
and
\begin{align*}
(\mathcal F_{\U}\circ\mathcal F^{-1}_{\V}\widehat{g}_{\ast})(\xi)=\left(\mathcal
F_{\U}g\right)(\xi)=\widehat{g}(\xi).
\end{align*}
Hence it follows that
$$
(\widehat{f},\ \widehat{g})_{l^{2}_{\U}}=\sum_{\xi\in\ind}\widehat{f}(\xi)\ \overline{(\mathcal F_{\V}\circ\mathcal F^{-1}_{\U}\widehat{g})(\xi)}=\sum_{\xi\in\ind}\widehat{f}(\xi)\
\overline{\widehat{g}_{\ast}(\xi)}
$$
and
$$
(\widehat{f}_{\ast},\ \widehat{g}_{\ast})_{l^{2}_{\V}}=\sum_{\xi\in\ind}\widehat{f}_{\ast}(\xi)\
\overline{(\mathcal F_{\U}\circ\mathcal F^{-1}_{\V}\widehat{g}_{\ast})(\xi)}=\sum_{\xi\in\ind}\widehat{f}_{\ast}(\xi)\ \overline{\widehat{g}(\xi)}.
$$
To show Parseval's identity \eqref{Parseval}, using these properties and the
biorthogonality of $u_\xi$'s to $v_\eta$'s,
we can write
\begin{multline*}
(f,g)=\left(\sum_{\xi\in\ind}\widehat{f}(\xi)u_{\xi} \ , \
\sum_{\eta\in\ind}\widehat{g}_{\ast}(\eta)v_{\eta}\right)\\
=\sum_{\xi\in\ind}\sum_{\eta\in\ind}\widehat{f}(\xi)\overline{\widehat{g}_{\ast}(\eta)}\left(u_{\xi},
\ v_{\eta}\right)
=\sum_{\xi\in\ind}\widehat{f}(\xi)\overline{\widehat{g}_{\ast}(\xi)}=(\widehat{f},\widehat{g})_{l^{2}_{\U}},
\end{multline*}
proving \eqref{Parseval}.
Taking $f=g$, we get
\begin{equation*}
\|f\|_{\Sp}^{2}=(f,f)=
\sum_{\xi\in\ind}\widehat{f}(\xi)\overline{\widehat{f}_{\ast}(\xi)}=(\widehat{f},\widehat{f})_{l^{2}_{\U}}=\|\widehat{f}\|_{l^{2}_{\U}}^{2},
\end{equation*}
proving the first equality in \eqref{Planch}.
Then, by checking that
\begin{align*}
(f,f)=\overline{(f,f)}&=\sum_{\xi\in\ind}
\overline{\widehat{f}(\xi)}\widehat{f}_{\ast}(\xi)=\sum_{\xi\in\ind}
\widehat{f}_{\ast}(\xi)\overline{\widehat{f}(\xi)}=(\widehat{f}_{\ast},\widehat{f}_{\ast})_{l^{2}_{\V}}
=\|\widehat{f}_{\ast}\|_{l^{2}_{\V}}^{2},
\end{align*}
the proofs of \eqref{Planch} and of Theorem \ref{PlanchId} are complete.
\end{proof}

\subsection{Hausdorff-Young inequality}

Now, we introduce a set of Banach spaces $\{\Sp^{p}\}_{1\leq p\leq\infty}$ with the norms $\|\cdot\|_{p}$ such that
$$
\Sp^{p}\subseteq\Sp
$$
and with the property
\begin{equation}\label{EQ:Holder}
|(x, y)_{\Sp}|\leq\|x\|_{\Sp^{p}} \|y\|_{\Sp^{q}}
\end{equation}
for all $1\leq p\leq\infty$, where $\frac1p+\frac1q=1$.
We assume that $\Sp^{2}=\Sp$, and that
$\Sp^{p}$ are real interpolation properties in the following sense:
$$
(\Sp^{1}, \Sp^{2})_{\theta,p}=\Sp^{p}, \; 0<\theta<1,\; \frac1p=1-\frac{\theta}{2},
$$
and
$$
(\Sp^{1}, \Sp^{2})_{\theta,p}=\Sp^{p}, \; 0<\theta<1,\; \frac1p=\frac{1-\theta}{2}.
$$

We also assume that $\U\subset\Sp^{p}$ and $\V\subset\Sp^{p}$ for all $p\in[1, \infty]$.

If $\Sp=L^{2}(\Omega)$ for some $\Omega$, we could take $\Sp^{p}=L^{2}(\Omega)\cap L^{p}(\Omega)$. If $\Sp=S_{2}(\mathcal K)$ is the Hilbert space of Hilbert-Schmidt operators on a Hilbert space $\mathcal K$, then we can take $\Sp^{p}=S_{2}(\mathcal K)\cap S_{p}(\mathcal K),$ where $S_{p}(\mathcal K)$ stands for the space of $p$-Schatten operators on $\mathcal K.$

Below we introduce the $p$-Lebesgue versions of the spaces of Fourier coefficients.
Here classical $l^p$ spaces on $\ind$ are extended in a way so that we associate them to the given biorthogonal systems.

\begin{defi} Let us define spaces $l^{p}_{\U}=l^{p}(\U)$ as the spaces of all
$a:\ind\to\mathbb C$ such that
\begin{equation}\label{EQ:norm1}
\|a\|_{l^{p}(\U)}:=\left(\sum_{\xi\in\ind}| a(\xi)|^{p}
\|u_{\xi}\|^{2-p}_{\Sp^{\infty}} \right)^{1/p}<\infty,\quad \textrm{ for }\; 1\leq p\leq2,
\end{equation}
and
\begin{equation}\label{EQ:norm2}
\|a\|_{l^{p}(\U)}:=\left(\sum_{\xi\in\ind}| a(\xi)|^{p}
\|v_{\xi}\|^{2-p}_{\Sp^{\infty}} \right)^{1/p}<\infty,\quad \textrm{ for }\; 2\leq p<\infty,
\end{equation}
and, for $p=\infty$,
$$
\|a\|_{l^{\infty}(\U)}:=\sup_{\xi\in\ind}\left( |a(\xi)|\cdot
\|v_{\xi}\|^{-1}_{\Sp^{\infty}}\right)<\infty.
$$
\end{defi}
Here, without loss of generality, we can assume that $u_{\xi}\not=0$ and $v_{\xi}\not=0$ for all $\xi\in\ind$, so that the above spaces are well-defined.

Analogously, we introduce spaces $l^{p}_{\V}=l^{p}(\V)$ as the spaces of
all $b:\ind\to\mathbb C$ such that
$$
\|b\|_{l^{p}(\V)}=\left(\sum_{\xi\in\ind}|
b(\xi)|^{p} \|v_{\xi}\|^{2-p}_{\Sp^{\infty}} \right)^{1/p}<\infty,\quad \textrm{ for }\; 1\leq p\leq2,
$$
$$
\|b\|_{l^{p}(\V)}=\left(\sum_{\xi\in\ind}|
b(\xi)|^{p} \|u_{\xi}\|^{2-p}_{\Sp^{\infty}}
\right)^{1/p}<\infty,\quad \textrm{ for }\; 2\leq p<\infty,
$$
$$
\|b\|_{l^{\infty}(\V)}=\sup_{\xi\in\ind}\left(|b(\xi)|\cdot \|u_{\xi}\|^{-1}_{\Sp^{\infty}}\right).
$$
Now, we recall a theorem on the interpolation of weighted spaces from Bergh and L\"ofstr\"om
\cite[Theorem 5.5.1]{Bergh-Lofstrom:BOOK-Interpolation-spaces}. Then we formulate some basic properties of $l^{p}(\U)$.

\begin{thm}[Interpolation of weighted spaces] \label{TH: IWS}
Let us write
$d\mu_{0}(x)=\omega_{0}(x)d\mu(x),$
$d\mu_{1}(x)=\omega_{1}(x)d\mu(x),$ and write
$L^{p}(\omega)=L^{p}(\omega d\mu)$ for the weight $\omega$.
Suppose that $0<p_{0}, p_{1}<\infty$. Then
$$
(L^{p_{0}}(\omega_{0}), L^{p_{1}}(\omega_{1}))_{\theta,
p}=L^{p}(\omega),
$$
where $0<\theta<1$, $\frac{1}{p}=\frac{1-\theta}{p_{0}}+\frac{\theta}{p_{1}}$, and
$\omega=\omega_{0}^{\frac{p(1-\theta)}{p_{0}}}\omega_{1}^{\frac{p\theta}{p_{1}}}$.
\end{thm}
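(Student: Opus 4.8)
The plan is to read this as a real-interpolation statement and to prove it by the $K$-functional method, in the classical Stein--Weiss and Calder\'on--Mityagin spirit (indeed this is exactly \cite[Theorem 5.5.1]{Bergh-Lofstrom:BOOK-Interpolation-spaces}, so in the present note it is only quoted, not reproved). The first step is to eliminate the two separate weights by the isometric substitution $f\mapsto g:=\omega_{0}^{1/p_{0}}f$, under which $\|f\|_{L^{p_{0}}(\omega_{0})}=\|g\|_{L^{p_{0}}(d\mu)}$ and $\|f\|_{L^{p_{1}}(\omega_{1})}=\|g\|_{L^{p_{1}}(w\,d\mu)}$ with the single weight $w:=\bigl(\omega_{1}^{1/p_{1}}\omega_{0}^{-1/p_{0}}\bigr)^{p_{1}}$. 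Since this is an isometric isomorphism of the whole Banach couple, it commutes with the real method, and the identity to be proved reduces to the one-weight couple $(L^{p_{0}}(d\mu),L^{p_{1}}(w\,d\mu))$.

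The heart of the argument is the computation of the $K$-functional of the couple $(L^{p_{0}},L^{p_{1}})$ for scalar functions. Here I would invoke Holmstedt's formula: writing $f^{*}$ for the decreasing rearrangement of $f$ with respect to the relevant measure, one has, with a suitable exponent $\alpha=\alpha(p_{0},p_{1})$ and up to constants independent of $t$ and $f$,
\[
K(t,f;L^{p_{0}},L^{p_{1}})\;\approx\;\Bigl(\int_{0}^{t^{\alpha}}f^{*}(s)^{p_{0}}\,ds\Bigr)^{1/p_{0}}+t\Bigl(\int_{t^{\alpha}}^{\infty}f^{*}(s)^{p_{1}}\,ds\Bigr)^{1/p_{1}}.
\]
The near-optimal decomposition is the pointwise truncation of $f$ at the level dictated by $t$; the subtlety is that the outer exponents $1/p_{0}\neq 1/p_{1}$ obstruct a purely pointwise minimization, which is precisely why the rearrangement and the threshold $t^{\alpha}$ must enter.

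Given this, I would substitute the $K$-functional into the real-method norm $\|f\|_{\theta,p}=\bigl(\int_{0}^{\infty}(t^{-\theta}K(t,f))^{p}\,\tfrac{dt}{t}\bigr)^{1/p}$ and evaluate the resulting integral by Hardy's inequalities applied to the two truncated integrals. Under the relation $\tfrac1p=\tfrac{1-\theta}{p_{0}}+\tfrac{\theta}{p_{1}}$ the two contributions combine to $\int_{0}^{\infty}f^{*}(s)^{p}\,ds=\|f\|_{L^{p}}^{p}$, i.e. the reduced unweighted couple satisfies $(L^{p_{0}},L^{p_{1}})_{\theta,p}=L^{p}$, which is Holmstedt's identification of the interpolation space with the Lorentz space $L(p,p)=L^{p}$. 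Undoing the substitution $g=\omega_{0}^{1/p_{0}}f$ and tracking the powers of $\omega_{0}$ and $\omega_{1}$ then reproduces exactly the claimed weight $\omega=\omega_{0}^{p(1-\theta)/p_{0}}\omega_{1}^{p\theta/p_{1}}$. The main obstacle is the $K$-functional computation itself: one must establish both directions of Holmstedt's equivalence uniformly in $t$, and since a single pointwise split is not optimal when $p_{0}\neq p_{1}$, the argument has to proceed through rearrangements and the Calder\'on--Mityagin monotonicity of the couple $(L^{p_{0}},L^{p_{1}})$.
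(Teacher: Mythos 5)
First, a remark on the comparison itself: the paper does not prove this statement at all --- it is recalled verbatim from Bergh--L\"ofstr\"om \cite[Theorem 5.5.1]{Bergh-Lofstrom:BOOK-Interpolation-spaces}, a point you correctly identify --- so your sketch has to be judged on its own merits, and there it contains a genuine gap at its central step. The substitution $f\mapsto g=\omega_{0}^{1/p_{0}}f$ turns the couple $(L^{p_{0}}(\omega_{0}),L^{p_{1}}(\omega_{1}))$ into $(L^{p_{0}}(d\mu),L^{p_{1}}(w\,d\mu))$, which is still a couple over \emph{two different measures}, $d\mu$ and $w\,d\mu$. Holmstedt's formula, which you then invoke, is a statement about $(L^{p_{0}}(\nu),L^{p_{1}}(\nu))$ over a \emph{single} measure $\nu$: the decreasing rearrangement $f^{*}$ must be taken with respect to one fixed measure, and after your reduction no such measure exists --- your phrase ``the relevant measure'' is exactly the unresolved point. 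Consequently your concluding identification ``the reduced unweighted couple satisfies $(L^{p_{0}},L^{p_{1}})_{\theta,p}=L^{p}$'' silently discards the remaining weight $w$; with $w$ present, that identity is itself a special case ($\omega_{0}\equiv 1$, $\omega_{1}=w$) of the theorem being proved, so as written the argument either drops a weight or assumes what it claims.

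The repair, when $p_{0}\neq p_{1}$, is to use a different substitution: $f\mapsto h:=(\omega_{1}/\omega_{0})^{1/(p_{1}-p_{0})}f$ converts \emph{both} norms simultaneously into unweighted ones over the new measure $d\nu:=\omega_{0}^{p_{1}/(p_{1}-p_{0})}\omega_{1}^{-p_{0}/(p_{1}-p_{0})}\,d\mu$, i.e. $\|h\|_{L^{p_{i}}(d\nu)}=\|f\|_{L^{p_{i}}(\omega_{i})}$ for $i=0,1$. Then Holmstedt's formula (or the standard identification $(L^{p_{0}}(\nu),L^{p_{1}}(\nu))_{\theta,p}=L^{p}(\nu)$) applies legitimately, and undoing the substitution reproduces precisely $\omega=\omega_{0}^{p(1-\theta)/p_{0}}\omega_{1}^{p\theta/p_{1}}$. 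Note, however, that the hypotheses allow $p_{0}=p_{1}$, in which case no multiplication can equalize two genuinely different weights, so that case requires a separate argument. This is one reason the proof in Bergh--L\"ofstr\"om takes an entirely different route: the power theorem (their Theorem 3.11.6) replaces the couple by one whose $K$-functional decouples \emph{pointwise}, $K(t,f)\approx\int\min\bigl(|f|^{p_{0}}\omega_{0},\,t\,|f|^{p_{1}}\omega_{1}\bigr)\,d\mu$, with no rearrangements at all; that argument treats all $0<p_{0},p_{1}<\infty$ uniformly, including $p_{0}=p_{1}$ and the quasi-Banach range, where duality-based tools are unavailable.
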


From this we obtain the following property:

\begin{cor}[Interpolation of $l^{p}(\U)$ and $l^{p}(\V)$]
\label{COR:Interp}
For $1\leq p\leq2$, we obtain
$$
(l^{1}(\U), l^{2}(\U))_{\theta,p}=l^{p}(\U),
$$
$$
(l^{1}(\V), l^{2}(\V))_{\theta,p}=l^{p}(\V),
$$
where $0<\theta<1$ and $p=\frac{2}{2-\theta}$.
\end{cor}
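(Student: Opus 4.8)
The plan is to apply the interpolation theorem for weighted $L^p$-spaces (Theorem~\ref{TH: IWS}) directly, after recognizing the spaces $l^p(\U)$ as weighted $L^p$-spaces with respect to counting measure on the discrete set $\ind$. First I would set up the measure-theoretic framework: take $\mu$ to be the counting measure on $\ind$, so that $L^p(\omega)$ is precisely the space of sequences $a:\ind\to\mathbb C$ with $\sum_{\xi}|a(\xi)|^p\omega(\xi)<\infty$. Comparing this with the definition \eqref{EQ:norm1} for the range $1\leq p\leq 2$, I read off the weight that realizes $l^p(\U)$ as such a weighted space.

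Next I would identify the endpoint weights. For $l^1(\U)$, formula \eqref{EQ:norm1} with $p=1$ gives the weight $\omega_0(\xi)=\|u_\xi\|_{\Sp^\infty}$, while for $l^2(\U)$, the weight is $\omega_1(\xi)=\|u_\xi\|^0_{\Sp^\infty}=1$. Now I would invoke Theorem~\ref{TH: IWS} with $p_0=1$, $p_1=2$, and these weights. The exponent relation $\frac1p=\frac{1-\theta}{1}+\frac{\theta}{2}=1-\frac\theta2=\frac{2-\theta}{2}$ yields exactly $p=\frac{2}{2-\theta}$, matching the statement. It then remains to verify that the interpolated weight $\omega=\omega_0^{p(1-\theta)/p_0}\omega_1^{p\theta/p_1}$ coincides with the weight $\|u_\xi\|^{2-p}_{\Sp^\infty}$ appearing in the definition of $\|\cdot\|_{l^p(\U)}$.

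The one genuine computation, and the step to watch most carefully, is this verification of the weight exponent. Since $\omega_1\equiv 1$, the second factor is trivial and $\omega=\omega_0^{p(1-\theta)}=\|u_\xi\|^{p(1-\theta)}_{\Sp^\infty}$. I would then check that $p(1-\theta)=2-p$: substituting $\theta=2-\frac2p$ (equivalently $1-\theta=\frac2p-1=\frac{2-p}{p}$) gives $p(1-\theta)=p\cdot\frac{2-p}{p}=2-p$, as required. This confirms that the real interpolation space is exactly $l^p(\U)$ as defined in \eqref{EQ:norm1}, completing the first identity.

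Finally, the identity for $l^p(\V)$ follows by the identical argument with $u_\xi$ replaced by $v_\xi$ throughout: the endpoint weights are $\|v_\xi\|_{\Sp^\infty}$ and $1$, and the same exponent bookkeeping gives the weight $\|v_\xi\|^{2-p}_{\Sp^\infty}$ of the $l^p(\V)$-norm in the range $1\leq p\leq 2$. I would simply remark that the computation is symmetric and omit the repetition. The only subtlety worth flagging is the implicit normalization $u_\xi\neq0$, $v_\xi\neq0$ (already assumed after the definition) which guarantees the weights are strictly positive and the weighted-space framework of Theorem~\ref{TH: IWS} applies without degeneracy.
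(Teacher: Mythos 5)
Your proof is correct and follows exactly the route the paper takes: the paper derives the corollary directly from the weighted interpolation theorem (Theorem~\ref{TH: IWS}, Bergh--L\"ofstr\"om), viewing $l^p(\U)$ as a weighted $L^p$-space over counting measure with endpoint weights $\|u_\xi\|_{\Sp^\infty}$ and $1$. Your explicit verification that $p(1-\theta)=2-p$ simply fills in the bookkeeping the paper leaves implicit.
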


Using Theorem \ref{TH: IWS} and Corollary \ref{COR:Interp} we get the following Hausdorff-Young inequality.

\begin{thm}[Hausdorff-Young inequality] \label{TH: HY}
Assume that $1\leq
p\leq2$ and $\frac{1}{p}+\frac{1}{p'}=1$. Then there exists a constant $C_{p}\geq 1$ such that
\begin{equation}\label{EQ:HY}
\|\widehat{f}\|_{l^{p'}(\U)}\leq C_{p}\|f\|_{\Sp^{p}}\quad \textrm{ and }\quad \|\mathcal F_{\U}^{-1}a\|_{\Sp^{p'}}\leq C_{p}\|a\|_{l^{p}(\U)}
\end{equation}
for all $f\in \Sp^{p}$
and $a\in l^{p}(\U)$. Similarly, for all $b\in l^{p}(\V)$ we obtain
\begin{equation}\label{EQ:HYast}
\|\widehat{f}_*\|_{l^{p'}(\V)}\leq C_{p}\|f\|_{\Sp^{p}}\quad \textrm{ and }
\quad \|\mathcal F_{\V}^{-1}b\|_{\Sp^{p'}}\leq C_{p}\|b\|_{l^{p}(\V)}.
\end{equation}
\end{thm}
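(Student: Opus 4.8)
The plan is to reduce everything to the two endpoints $p=1$ and $p=2$, to prove the synthesis (inverse--transform) inequalities in \eqref{EQ:HY} and \eqref{EQ:HYast} by real interpolation, and then to recover the analysis (forward--transform) inequalities by duality.

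First I would record the two endpoint bounds for the synthesis operator $\F_{\U}^{-1}$. At $p=1$, since $\U\subset\Sp^{\infty}$ and $\Sp^{\infty}$ is a Banach space, the triangle inequality gives $\|\F_{\U}^{-1}a\|_{\Sp^{\infty}}=\|\sum_{\xi\in\ind}a(\xi)u_{\xi}\|_{\Sp^{\infty}}\le\sum_{\xi\in\ind}|a(\xi)|\,\|u_{\xi}\|_{\Sp^{\infty}}=\|a\|_{l^{1}(\U)}$, because the weight at $p=1$ is exactly $\|u_{\xi}\|_{\Sp^{\infty}}$. At $p=2$, the Riesz basis (frame) property, i.e. the upper bound in \eqref{LEM: FTl2}, shows that the synthesis operator is bounded from the unweighted $l^{2}$ into $\Sp^{2}=\Sp$, so $\|\F_{\U}^{-1}a\|_{\Sp}\le C\|a\|_{l^{2}(\U)}$. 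The same two bounds hold for $\F_{\V}^{-1}$ after exchanging the roles of $\U$ and $\V$.

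Next I would interpolate. Applying the real interpolation functor $(\cdot,\cdot)_{\theta,r}$ to $\F_{\U}^{-1}\colon(l^{1}(\U),l^{2}(\U))\to(\Sp^{\infty},\Sp^{2})$ with secondary index $r=2/(2-\theta)\in[1,2]$, Corollary~\ref{COR:Interp} identifies the source as $(l^{1}(\U),l^{2}(\U))_{\theta,r}=l^{r}(\U)$. On the target side the assumed interpolation identity for the scale $\Sp^{p}$ gives $(\Sp^{\infty},\Sp^{2})_{\theta,r'}=\Sp^{r'}$ with $1/r'=\theta/2$; since $r\le 2\le r'$ and real interpolation spaces increase with the secondary parameter, we have the continuous inclusion $(\Sp^{\infty},\Sp^{2})_{\theta,r}\hookrightarrow(\Sp^{\infty},\Sp^{2})_{\theta,r'}=\Sp^{r'}$. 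Composing yields $\|\F_{\U}^{-1}a\|_{\Sp^{r'}}\le C_{p}\|a\|_{l^{r}(\U)}$, which (renaming $r=p$, $r'=p'$) is the second inequality in \eqref{EQ:HY}; the $\V$--version in \eqref{EQ:HYast} is obtained identically, and the endpoints $p=1,2$ are the direct bounds above.

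Finally I would derive the analysis inequalities by duality. A direct computation with the weights shows that, under the unweighted pairing $\langle a,b\rangle=\sum_{\xi\in\ind}a(\xi)\overline{b(\xi)}$, the dual of $l^{p'}(\U)$ is $l^{p}(\V)$ (the weight exponent $2-p'$ transforms precisely into $2-p$ under conjugation of indices). Hence $\|\widehat{f}\|_{l^{p'}(\U)}=\sup\{|\langle\widehat{f},b\rangle|:\|b\|_{l^{p}(\V)}\le1\}$. Given such a $b$, set $g=\F_{\V}^{-1}b$; biorthogonality gives $\widehat{g}_{\ast}=b$, so Parseval's identity \eqref{Parseval} yields $\langle\widehat{f},b\rangle=\sum_{\xi\in\ind}\widehat{f}(\xi)\overline{\widehat{g}_{\ast}(\xi)}=(f,g)_{\Sp}$. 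Estimating by the H\"older--type inequality \eqref{EQ:Holder} and the already--proved synthesis bound for $\V$ gives $|(f,g)_{\Sp}|\le\|f\|_{\Sp^{p}}\|g\|_{\Sp^{p'}}\le C_{p}\|f\|_{\Sp^{p}}\|b\|_{l^{p}(\V)}\le C_{p}\|f\|_{\Sp^{p}}$, and taking the supremum over $b$ proves the first inequality in \eqref{EQ:HY}. The endpoint $p=1$ (where $p'=\infty$ and the duality is more delicate) I would instead verify directly, $\|\widehat{f}\|_{l^{\infty}(\U)}=\sup_{\xi\in\ind}|(f,v_{\xi})|\,\|v_{\xi}\|_{\Sp^{\infty}}^{-1}\le\|f\|_{\Sp^{1}}$ straight from \eqref{EQ:Holder}, while $p=2$ is Theorem~\ref{PlanchId}; the analysis bound for $\V$ follows symmetrically.

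The main obstacle is the interpolation step: Theorem~\ref{TH: IWS} excludes the exponent $\infty$, so the $l^{\infty}$ and $\Sp^{\infty}$ endpoints cannot be fed into it directly. This is why I route the $l^{\infty}$ endpoint through duality rather than interpolation, and why the synthesis argument relies on the relation $(\Sp^{\infty},\Sp^{2})_{\theta,r}\hookrightarrow\Sp^{r'}$ being only a continuous inclusion, justified by monotonicity in the secondary index rather than an equality. Care is also needed to confirm that the weighted sequence spaces carry exactly the biorthogonal duality $(l^{p'}(\U))'=l^{p}(\V)$, since it is this pairing that couples cleanly with Parseval's identity \eqref{Parseval}.
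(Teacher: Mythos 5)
Your proof is correct, and for half of the statement it takes a genuinely different route from the paper's. The paper proves exactly your two $p=1$ endpoint bounds (the analysis bound $\Sp^{1}\to l^{\infty}(\U)$ from \eqref{EQ:Holder}, the synthesis bound $l^{1}(\U)\to\Sp^{\infty}$ from the triangle inequality), takes $p=2$ from Plancherel (Theorem \ref{PlanchId}), and then obtains \emph{both} inequalities in \eqref{EQ:HY} at once by real interpolation, delegating that step to the citation of Bergh--L\"ofstr\"om, Corollary 5.5.4. You interpolate only the synthesis operator $\F_{\U}^{-1}$, with the source couple $(l^{1}(\U),l^{2}(\U))$ identified by Corollary \ref{COR:Interp}, and the target handled by the assumed interpolation identity for the scale $\Sp^{p}$ (read for the couple $(\Sp^{\infty},\Sp^{2})$, which is what the exponent formula $1/p=(1-\theta)/2$ forces despite the misprint $(\Sp^{1},\Sp^{2})$ in the paper) together with monotonicity of $(\cdot,\cdot)_{\theta,q}$ in the secondary index $q$; you then recover the analysis inequality by duality, combining $\left(l^{p}(\V)\right)'=l^{p'}(\U)$ (Theorem \ref{TH:Duality lp}), Parseval \eqref{Parseval}, the H\"older-type inequality \eqref{EQ:Holder}, and the already-proved synthesis bound for $\V$. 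The trade-off: your route is more self-contained, since Theorem \ref{TH: IWS} excludes infinite exponents, so the paper's own stated interpolation results never identify interpolation spaces of couples containing $l^{\infty}(\U)$, and the paper's direct interpolation of the analysis operator therefore rests entirely on the external citation; your duality detour avoids this, at the cost of extra length and of making the $\U$-analysis bound depend on the $\V$-synthesis bound (harmless, since you establish all synthesis bounds first).

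Two details to tighten. At $p=2$, the estimate $\|\F_{\U}^{-1}a\|_{\Sp}\leq C\|a\|_{\ell^{2}}$ follows from the \emph{lower} bound in the first chain of \eqref{LEM: FTl2} applied to $g=\F_{\U}^{-1}a$, whose $\U$-Fourier coefficients $(g,v_{\xi})$ equal $a(\xi)$ by biorthogonality (equivalently, from the Riesz basis property directly), not from the upper bound. And in the duality step you should take the supremum over \emph{finitely supported} $b$ in the unit ball of $l^{p}(\V)$ — this already computes $\|\widehat{f}\|_{l^{p'}(\U)}$ by exhausting finite subsets of the index set — so that $g=\F_{\V}^{-1}b$ is a finite sum lying in $\Sp\cap\Sp^{p'}$ with $\widehat{g}_{\ast}=b$ exactly, making Parseval and the synthesis bound applicable without any convergence caveats.
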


\begin{proof}
It is sufficient to prove only \eqref{EQ:HY} since \eqref{EQ:HYast} is similar. Note that
\eqref{EQ:HY} would follow from the $\Sp^{1}\rightarrow
l^{\infty}(\U)$ and $l^{1}(\U)\rightarrow \Sp^{\infty}$ boundedness
in view of the Plancherel identity in Theorem \ref{PlanchId}
by interpolation, see e.g. Bergh and L\"ofstr\"om
\cite[Corollary 5.5.4]{Bergh-Lofstrom:BOOK-Interpolation-spaces}.

Thereby, we can put $p=1$. Then from \eqref{EQ:Holder} we have
$$
|\widehat{f}(\xi)|\leq\|{v_{\xi}}\|_{\Sp^{\infty}}\|f\|_{\Sp^{1}},
$$
and hence
$$
\|\widehat{f}\|_{l^{\infty}(\U)}=\sup_{\xi\in\ind}|\widehat{f}(\xi)|
\|v_{\xi}\|^{-1}_{\Sp^{\infty}}\leq\|f\|_{\Sp^{1}}.
$$
The last estimate gives the first inequality in \eqref{EQ:HY} for $p=1$.
For the second inequality, using
$$(\mathcal F_{\U}^{-1}a)=\sum\limits_{\xi\in\ind}a(\xi)u_{\xi}$$
we obtain
$$
\|\mathcal F_{\U}^{-1}a\|_{\Sp^{\infty}}\leq\sum\limits_{\xi\in\ind}|a(\xi)|\|u_{\xi}\|_{\Sp^{\infty}}
=\|a\|_{l^{1}(\U)},
$$
in view of the definition of $l^{1}(\U)$,
which gives \eqref{EQ:HY} in the case $p=1$.
The proof is complete.
\end{proof}

Let us establish the duality between spaces $l^{p}(\U)$ and
$l^{q}(\V)$:

\begin{thm}[Duality of $l^{p}(\U)$ and $l^{q}(\V)$] \label{TH:Duality lp}
Let $1\leq p<\infty$ and
$\frac{1}{p}+\frac{1}{q}=1$. Then
$$\left(l^{p}(\U)\right)'=l^{q}(\V) \quad \textrm{ and }\quad \left(l^{p}(\V)\right)'=l^{q}(\U).$$
\end{thm}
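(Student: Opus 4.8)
The plan is to observe that, as sets equipped with their explicit norms, each of $l^{p}(\U)$ and $l^{p}(\V)$ is simply a weighted sequence space $\ell^{p}(\ind,w)$ for a positive weight $w$ built from the numbers $\|u_{\xi}\|_{\Sp^{\infty}}$ and $\|v_{\xi}\|_{\Sp^{\infty}}$, and then to reduce the statement to the classical duality of weighted $\ell^{p}$ spaces. By the symmetry of interchanging the roles of $\U$ and $\V$ it suffices to prove $(l^{p}(\U))'=l^{q}(\V)$; the pairing I would use is the sesquilinear form $\langle a,b\rangle:=\sum_{\xi\in\ind}a(\xi)\,\overline{b(\xi)}$, which is exactly the one for which the Plancherel identity of Theorem \ref{PlanchId} reads $(\widehat{f},\widehat{g})_{l^{2}_{\U}}=\langle\widehat{f},\widehat{g}_{\ast}\rangle$, so that the general duality extends the $p=2$ Hilbert-space pairing between $l^{2}_{\U}$ and $l^{2}_{\V}$.

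First I would record the weights explicitly, keeping the two regimes separate since the definition of the norm switches at $p=2$. Writing $\alpha_{\xi}=\|u_{\xi}\|_{\Sp^{\infty}}$ and $\beta_{\xi}=\|v_{\xi}\|_{\Sp^{\infty}}$: for $1\leq p\leq 2$ the space $l^{p}(\U)$ carries the weight $\alpha_{\xi}^{2-p}$, its conjugate exponent satisfies $q\geq 2$, and $l^{q}(\V)$ carries the weight $\alpha_{\xi}^{2-q}$; for $2<p<\infty$ the space $l^{p}(\U)$ carries the weight $\beta_{\xi}^{2-p}$, one has $1<q<2$, and $l^{q}(\V)$ carries the weight $\beta_{\xi}^{2-q}$. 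I would then invoke (or reprove in one line, via the isometric substitution $a(\xi)\mapsto a(\xi)w_{\xi}^{1/p}$ reducing to the classical $(\ell^{p})'=\ell^{q}$) the elementary fact that for $\|a\|^{p}=\sum_{\xi}|a(\xi)|^{p}w_{\xi}$ the dual norm of $b$ under $\langle a,b\rangle$ equals $\bigl(\sum_{\xi}|b(\xi)|^{q}w_{\xi}^{-q/p}\bigr)^{1/q}$. The inequality $\|b\|_{(l^{p}(\U))'}\leq\|b\|_{l^{q}(\V)}$ is then weighted H\"older, while the reverse inequality together with surjectivity (every bounded functional arises from such a $b$, which is where $p<\infty$ is used) follows by testing against the standard near-extremal sequences and transporting the classical result through the isometry.

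The only genuine content is the bookkeeping of exponents: I must check that the dual weight $w_{\xi}^{-q/p}$ produced by dualizing $l^{p}(\U)$ coincides with the weight appearing in the definition of $l^{q}(\V)$. In the regime $1\leq p\leq 2$ this is the identity $\alpha_{\xi}^{-(2-p)q/p}=\alpha_{\xi}^{2-q}$, which reduces to $-(2-p)q/p=2-q$, i.e. to $q(1-\tfrac1p)=1$, true because $1-\tfrac1p=\tfrac1q$; the regime $2<p<\infty$ is identical with $\beta_{\xi}$ in place of $\alpha_{\xi}$, and the threshold $p=q=2$ is the unweighted case, consistent with the Plancherel pairing. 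The boundary case $p=1$, $q=\infty$ is verified directly: the dual weight of $l^{1}(\U)$ is $\alpha_{\xi}^{-1}$, matching the norm $\sup_{\xi}|b(\xi)|\,\|u_{\xi}\|_{\Sp^{\infty}}^{-1}$ of $l^{\infty}(\V)$.

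I expect the main obstacle to be precisely this cross-matching step, rather than any analytic difficulty. Because the norms of $l^{p}(\U)$ and $l^{p}(\V)$ use $\|u_{\xi}\|_{\Sp^{\infty}}$ for $p\leq 2$ but $\|v_{\xi}\|_{\Sp^{\infty}}$ for $p\geq 2$, one must confirm that the weight coming out of dualizing $l^{p}(\U)$ (built from $\alpha_{\xi}$ when $p\leq 2$, from $\beta_{\xi}$ when $p\geq 2$) is compared against the correct branch of the definition of $l^{q}(\V)$ (built from $\alpha_{\xi}$ when $q\geq 2$, from $\beta_{\xi}$ when $q\leq 2$). The asymmetric choices in the definitions are arranged exactly so that the equivalence $p\leq 2\Leftrightarrow q\geq 2$ puts $\alpha_{\xi}$ on both sides, and the other regime puts $\beta_{\xi}$ on both sides. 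Once this matching is checked, both claimed isomorphisms follow, the second being obtained from the first by swapping $\U\leftrightarrow\V$.
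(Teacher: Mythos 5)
Your proposal is correct and follows essentially the same route as the paper: the paper likewise treats $l^{p}(\U)$ and $l^{q}(\V)$ as weighted sequence spaces paired by $\sum_{\xi}\sigma_{1}(\xi)\sigma_{2}(\xi)$, and carries out exactly your exponent bookkeeping via weighted H\"older — inserting $\|u_{\xi}\|_{\Sp^{\infty}}^{\frac{2}{p}-1}\|u_{\xi}\|_{\Sp^{\infty}}^{-(\frac{2}{p}-1)}$ for $1<p\leq 2$ (using $\tfrac{2}{p}-1=1-\tfrac{2}{q}$), the same with $\|v_{\xi}\|_{\Sp^{\infty}}$ for $2<p<\infty$, and the direct estimate against $\|\sigma_{2}\|_{l^{\infty}(\V)}$ for $p=1$. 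The differences are cosmetic or in your favour: the paper's pairing is bilinear rather than sesquilinear, and the paper proves only the bound $|(\sigma_{1},\sigma_{2})|\leq\|\sigma_{1}\|_{l^{p}(\U)}\|\sigma_{2}\|_{l^{q}(\V)}$, calling the rest ``standard,'' whereas your isometric substitution $a(\xi)\mapsto a(\xi)w_{\xi}^{1/p}$ reducing to classical $\ell^{p}$ duality explicitly supplies the surjectivity (representation) half that the paper leaves implicit.
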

\begin{proof}
The proof is standard. Meanwhile, we provide several details for clarity.
The duality is given by
$$
(\sigma_{1}, \sigma_{2})=\sum\limits_{\xi\in\ind
}\sigma_{1}(\xi){\sigma_{2}(\xi)}
$$
for $\sigma_{1}\in l^{p}(\U)$ and $\sigma_{2}\in l^{q}(\V)$.
Let $1<p\leq2$. Then, if $\sigma_{1}\in l^{p}(\U)$
and $\sigma_{2}\in l^{q}(\V)$, we obtain
\begin{align*}
|(\sigma_{1}, \sigma_{2})|&= \left|\sum_{\xi\in\ind}
\sigma_{1}(\xi)\sigma_{2}(\xi)\right|\\
&=\left|\sum_{\xi\in\ind}
\sigma_{1}(\xi)\|u_{\xi}\|_{\Sp^{\infty}}^{\frac{2}{p}-1}\|u_{\xi}\|_{\Sp^{\infty}}^{-(\frac{2}{p}-1)}\sigma_{2}(\xi)\right|\\
&\leq\left(\sum_{\xi\in\ind}
|\sigma_{1}(\xi)|^{p}\|u_{\xi}\|_{\Sp^{\infty}}^{p(\frac{2}{p}-1)}\right)^{p}\left(\sum_{\xi\in\ind}
|\sigma_{2}(\xi)|^{q}\|u_{\xi}\|_{\Sp^{\infty}}^{-q(\frac{2}{p}-1)}\right)^{\frac{1}{q}}\\
&=\|\sigma_{1}\|_{l^{p}(\U)}\|\sigma_{2}\|_{l^{q}(\V)},
\end{align*}
where that $2\leq q<\infty$ and that
$\frac{2}{p}-1=1-\frac{2}{q}$ were used (last line).
Now, let $2<p<\infty$. If $\sigma_{1}\in l^{p}(\U)$
and $\sigma_{2}\in l^{q}(\V)$, we get
\begin{align*}
|(\sigma_{1}, \sigma_{2})|&= \left|\sum_{\xi\in\ind}
\sigma_{1}(\xi)\sigma_{2}(\xi)\right|\\
&=\left|\sum_{\xi\in\ind}
\sigma_{1}(\xi)\|v_{\xi}\|_{\Sp^{\infty}}^{\frac{2}{p}-1}\|v_{\xi}\|_{\Sp^{\infty}}^{-(\frac{2}{p}-1)}\sigma_{2}(\xi)\right|\\
&\leq\left(\sum_{\xi\in\ind}
|\sigma_{1}(\xi)|^{p}\|v_{\xi}\|_{\Sp^{\infty}}^{p(\frac{2}{p}-1)}\right)^{p}\left(\sum_{\xi\in\ind}
|\sigma_{2}(\xi)|^{q}\|v_{\xi}\|_{\Sp^{\infty}}^{-q(\frac{2}{p}-1)}\right)^{\frac{1}{q}}\\
&=\|\sigma_{1}\|_{l^{p}(\U)}\|\sigma_{2}\|_{l^{q}(\V)}.
\end{align*}

Put $p=1$. Then we have
\begin{align*}
|(\sigma_{1}, \sigma_{2})|&= \left|\sum_{\xi\in\ind}
\sigma_{1}(\xi)\sigma_{2}(\xi)\right|\\
&=\left|\sum_{\xi\in\ind}
\sigma_{1}(\xi)\|u_{\xi}\|_{\Sp^{\infty}}\|u_{\xi}\|_{\Sp^{\infty}}^{-1}\sigma_{2}(\xi)\right|\\
&\leq\left(\sum_{\xi\in\ind}
|\sigma_{1}(\xi)|\,\|u_{\xi}\|_{\Sp^{\infty}}\right)\sup_{\xi\in\ind}|\sigma_{2}(\xi)|\,\|u_{\xi}\|^{-1}_{\Sp^{\infty}}\\
&=\|\sigma_{1}\|_{l^{1}(\U)}\|\sigma_{2}\|_{l^{\infty}(\V)}.
\end{align*}
The adjoint space cases could be proven in a similar way.
\end{proof}

\section{Rigged Hilbert spaces}

In this section we will investigate a rigged structure of the Hilbert space $\Sp$. Especially, we will construct a (Gelfand) triple $(\Phi, \Sp, \Phi')$ with the inclusion property
$$
\Phi\subset\Sp\subset\Phi',
$$
where a role of $\Phi$ will be played by the so-called `spaces of test functions' $\C^{\infty}_{\U, \S}$ and  $\C^{\infty}_{\V, \S}$ generated by the systems $\U$ and $\V$, respectively, and by some sequence $\S$ of complex numbers. For this aim, let us fix some sequence $\S:=\{\lambda_{\xi}\}_{\xi\in\ind}$ of complex numbers such that the series
\begin{equation}\label{EQ:asl}
\sum_{\xi\in\ind}(1+|\lambda_{\xi}|)^{-s_{0}}<\infty,
\end{equation}
converges for some $s_{0}>0$. Indeed, we build two triples, namely,  $(\Phi_{\U}, \Sp, \Phi_{\U}')$ and $(\Phi_{\V}, \Sp, \Phi_{\V}')$, with $\Phi_{\U}:=\C^{\infty}_{\U, \S}$, $\Phi_{\U}':=\mathcal D'_{\V, \S}$ and $\Phi_{\V}:=\C^{\infty}_{\V, \S}$, $\Phi_{\V}':=\mathcal D'_{\U, \S}$. These triples allow us to extend the notions of $\U$-- and $\V$--convolutions and $\U$-- and $\V$--Fourier transforms outside of the Hilbert space $\Sp$.

\begin{defi}\label{DEF:L}
We associate to the pair $(\U, \S)$ a linear operator $\L:\Sp\to\Sp$ by the formula
\begin{equation}\label{EQ-1.1}
\L f:=\sum_{\xi\in\ind}\lambda_{\xi}(f, v_{\xi}) u_{\xi},
\end{equation}
for those $f\in\Sp$ for which the series converges in $\Sp$.
Then $\L$ is densely defined since $\L u_{\xi}=\lambda_{\xi}u_{\xi}$ for all $\xi\in\ind$, and $\U$ is a basis in $\Sp$.
We denote by $\Dom(\L)$ the domain of the operator $\L$, so that we have
${\rm Span}\,(\U)\subset \Dom(\L)\subset\Sp$. We call $\L$ to be the operator associated to the pair $(\U, \S)$. Operators defined as in \eqref{EQ-1.1} have been also studied in \cite{BIT14}.
\end{defi}

We note that this construction goes in the opposite direction to the investigations devoted to the development of the global theory of pseudo-differential operators associated to a fixed operator, as in the papers \cite{DR14, DRT16, RT16, RT16a, RT16b}, where one is given an operator $\L$ acting in $\Sp$ with the system of eigenfunctions $\U$ and eigenvalues $\S$. In this case we could `control' only one parameter, i.e. the operator $\L$. In the present (more abstract) point of view we have two parameters to control: the system $\U$ and the sequence of numbers $\S$.
%Moreover, the set $\S$ is not needed to defined the convolutions \eqref{EQ:c1} and \eqref{EQ:c2}.

\smallskip
In a similar way to Definition \ref{DEF:L}, we define the operator $\Ls:\Sp\to\Sp$ by
$$
\Ls g:=\sum_{\xi\in\ind}\overline{\lambda_{\xi}}(g, u_{\xi}) v_{\xi},
$$
for those $g\in\Sp$ for which it makes sense. Then $\Ls$ is densely defined since $\Ls v_{\xi}=\overline{\lambda_{\xi}}v_{\xi}$ and $\V$ is a basis in $\Sp$, and
${\rm Span}\,(\V)\subset \Dom(\Ls)\subset\Sp$.
One readily checks that we have
$$(\L f,g)_{\Sp}=(f,\L^{*}g)_{\Sp}=\sum_{\xi\in\ind}\lambda_{\xi} (f,v_{\xi})(g,u_{\xi})
$$
on their domains.

\smallskip
We can now define the following notions:
\begin{itemize}
\item[(i)] the spaces of $(\U, \S)$-- and $(\V, \S)$--test functions are defined by
$$
\C^{\infty}_{\U, \S}:=\bigcap_{k\in\mathbb N_{0}}\C^{k}_{\U, \S},
$$
where
$$
\C^{k}_{\U, \S}:=\{\phi\in\Sp: \,\, |(\phi, v_{\xi})|\leq C (1+|\lambda_{\xi}|)^{-k} \,\,\, \hbox{for some constant} \,\, C \,\, \hbox{for all} \,\, \xi\in\ind\},
$$
and
$$
\C^{\infty}_{\V, \S}:=\bigcap_{k\in\mathbb N_{0}}\C^{k}_{\V, \S},
$$
where
$$
\C^{k}_{\V, \S}:=\{\psi\in\Sp: \,\, |(\psi, u_{\xi})|\leq C (1+|\lambda_{\xi}|)^{-k} \,\,\, \hbox{for some constant} \,\, C \,\, \hbox{for all} \,\, \xi\in\ind\}.
$$
%
%$$\C_{\U}:=\bigcap_{k\in\mathbb N_{0}}\Dom(\L^{k}),$$ where $$\Dom(\L^{k}):=\{f: \,\, \L^{i}f\in\Sp, \, i=2, ... , k-1 \},$$
%and $$\C_{\V}:=\bigcap_{k\in\mathbb N_{0}}\Dom((\Ls)^{k}),$$ where $$\Dom((\Ls)^{k}):=\{g: \,\, (\Ls)^{i}g\in\Sp, \, i=2, ... , k-1 \}.$$
%
%\mathrm{span}\{u_{\xi}, \,\,\, \xi\in\ind\}
%
The topology of these spaces is defined by a natural choice of seminorms.
We can define spaces of $(\U, \S)$-- and $(\V, \S)$--distributions by
$ \D'_{\U,\S}:=(\C^{\infty}_{\V, \S})'$ and $ \D'_{\V,\S}:=(\C^{\infty}_{\U, \S})'$, as spaces of linear continuous functionals on $\C^{\infty}_{\V, \S}$ and $\C^{\infty}_{\U, \S}$, respectively.
We follow the conventions of rigged Hilbert spaces to denote this duality by
\begin{equation}\label{EQ:dual}
\langle u,\phi\rangle_{\D'_{\U,\S}, \C^{\infty}_{\V, \S}}=(u,\phi)_{\Sp},
\end{equation}
extending the inner product on $\Sp$ for $u,\phi\in\Sp,$
and similarly for the pair $ \D'_{\V,\S}:=(\C^{\infty}_{\U, \S})'$.

\item[(ii)] the $\U$-- and $\V$--Fourier transforms
\begin{equation*}\label{EQ: F_u}
\F_{\U}(\phi)(\xi):=(\phi, v_{\xi})=:\widehat{\phi}(\xi)
\end{equation*}
and
\begin{equation*}\label{EQ: F_v}
\F_{\V}(\psi)(\xi):=(\psi, u_{\xi})=:\widehat{\psi}_{\ast}(\xi),
\end{equation*}
respectively, for arbitrary $\phi\in\C^{\infty}_{\U, \S}$, $\psi\in\C^{\infty}_{\V, \S}$ and for all $\xi\in\ind$, and hence by duality, these extend to $\D'_{\U,\S}$ and $ \D'_{\V,\S}$, respectively. Here we have
\begin{equation}\label{EQ: F_vd}
\langle\F_{\U}(w), a\rangle=\langle w, \F_{\V}^{-1}(a)\rangle,\quad
w\in  \D'_{\U,\S},\; a\in \mathcal S(\ind),
\end{equation}
where the space $\mathcal S(\ind)$ is defined in \eqref{EQ:ssp}.
Indeed, for $w\in\Sp$ we can calculate
\begin{multline*}\label{EQ:}
\langle \F_{\U}(w), a\rangle=(\widehat{w},a)_{\ell^{2}(\ind)}
=\sum_{\xi\in\ind}(w,v_{\xi}) \overline{a(\xi)}\\ =
\left(w, \sum_{\xi\in\ind} a(\xi) v_{\xi}\right)=
\left(w,\F_{\V}^{-1} a\right)=
\langle w,\F_{\V}^{-1}a\rangle,
\end{multline*}
justifying definition \eqref{EQ: F_vd}.
Similarly, we define
\begin{equation}\label{EQ: F_vdv}
\langle\F_{\V}(w), a\rangle=\langle w, \F_{\U}^{-1}(a)\rangle,\quad
w\in  \D'_{\V,\S},\; a\in \mathcal S(\ind).
\end{equation}
The Fourier transforms of elements of $\D'_{\U,\S}, \D'_{\V,\S}$ can be characterised by the property that, for example, for $w\in \D'_{\U,\S}$, there is $N>0$  and $C>0$ such that
$$|\F_{\U}w(\xi) | \leq C (1+|\lambda_{\xi}|)^{N},\quad \textrm{ for all }\;\xi\in\ind.$$

%The expressions \eqref{EQ: F_u} and \eqref{EQ: F_v} are well-defined by the Cauchy-Schwarz inequality, and the mappings
%$\F_{\U}$ and $\F_{\V}$ can be extended to  $\Sp$.
\item[(iii)]  $\U$-- and $\V$--convolutions can be extended by the same formula:
$$
f\s_{\U}g:= \sum_{\xi\in\ind}\widehat{f}(\xi) \widehat{g}(\xi) u_{\xi}=\sum_{\xi\in\ind}(f, v_\xi) (g, v_\xi) u_{\xi}
$$
for example, for all $f\in \D'_{\U,\S}$ and $g\in\C^{\infty}_{\U, \S}$. It is well-defined since the series converges in view of properties from (i) above and assumption \eqref{EQ:asl}.
By commutativity that spaces for $f$ and $g$ can be swapped.
Similarly,
$$
h\s_{\V}j:= \sum_{\xi\in\ind}\widehat{h}_{\ast}(\xi) \widehat{j}_{\ast}(\xi) v_{\xi}=\sum_{\xi\in\ind}(h, u_\xi) (j, u_\xi) v_{\xi}
$$
for each $h\in \D'_{\V,\S}$, $j\in\C^{\infty}_{\V, \S}$.
\end{itemize}

The space $\C^{\infty}_{\U, \S}$ can be also described in terms of the operator $\L$ in \eqref{EQ-1.1}. Namely, we have

\begin{equation}\label{EQ:Cl}
\C^{\infty}_{\U,\S}=\bigcap_{k\in\mathbb N_{0}}\Dom(\L^{k}),
\end{equation}
where $$\Dom(\L^{k}):=\{f\in\Sp: \,\, \L^{i}f\in\Sp, \, i=2, ... , k-1 \},$$
and similarly
$$\C^{\infty}_{\V,\S}=\bigcap_{k\in\mathbb N_{0}}\Dom((\Ls)^{k}),$$ where $$\Dom((\Ls)^{k}):=\{g\in\Sp: \,\, (\Ls)^{i}g\in\Sp, \, i=2, ... , k-1 \}.$$

Let $\mathcal S(\ind)$ denote the space of rapidly decaying
functions $\varphi:\ind\rightarrow\mathbb C$. That is,
$\varphi\in\mathcal S(\ind)$ if for any $M<\infty$ there
exists a constant $C_{\varphi, M}$ such that
\begin{equation}\label{EQ:ssp}
|\varphi(\xi)|\leq C_{\varphi, M}(1+|\lambda_{\xi}|)^{-M}
\end{equation}
holds for all $\xi\in\ind$.
The topology on $\mathcal
S(\ind)$ is given by the seminorms $p_{k}$, where
$k\in\mathbb N_{0}$ and $$p_{k}(\varphi):=\sup_{\xi\in\ind}(1+|\lambda_{\xi}|)^{k}|\varphi(\xi)|.$$
%\end{defn}
%\begin{rem}
%\label{TempDistr}
Continuous anti-linear functionals on $\mathcal S(\ind)$ are of
the form
$$
\varphi\mapsto\langle u, \varphi\rangle:=\sum_{\xi\in\ind}u(\xi)\overline{\varphi(\xi)},
$$
where functions $u:\ind \rightarrow \mathbb C$ grow at most
polynomially at infinity, i.e. there exist constants $M<\infty$
and $C_{u, M}$ such that
$$
|u(\xi)|\leq C_{u, M}(1+|\lambda_{\xi}|)^{M}
$$
holds for all $\xi\in\ind$. Such distributions $u:\ind
\rightarrow \mathbb C$ form the space of distributions which we denote by
$\mathcal S'(\ind)$, with the distributional duality (as a Gelfand triple) extending the inner product on $\ell^{2}(\ind)$.
%\end{rem}

%Furthermore, we have the following remark, and it can be proved as in \cite[Proposition 4.2]{RT16}.

Summarising the above definitions and discussion, we record the basic properties of the Fourier transforms  as follows:
\begin{prop}\label{LEM: FTinS}
The $\U$-Fourier transform
$\mathcal F_{\U}$ is a bijective homeomorphism from $\C^{\infty}_{\U, \S}$ to $\mathcal S(\ind)$.
Its inverse  $$\mathcal F_{\U}^{-1}: \mathcal S(\ind)
\rightarrow \C^{\infty}_{\U, \S}$$ is given by
\begin{equation}
\label{InvFourierTr} \mathcal F^{-1}_{\U}h=\sum_{\xi\in\ind}h(\xi)u_{\xi},\quad h\in\mathcal S(\ind),
\end{equation}
so that the Fourier inversion formula becomes
\begin{equation}
\label{InvFourierTr0}
f=\sum_{\xi\in\ind}\widehat{f}(\xi)u_{\xi}
\quad \textrm{ for all } f\in \C^{\infty}_{\U, \S}.
\end{equation}
Similarly,  $\mathcal F_{\V}:\C^{\infty}_{\V, \S}\to \mathcal S(\ind)$
is a bijective homeomorphism and its inverse
$$
\mathcal F_{\V}^{-1}: \mathcal S(\ind)\rightarrow
\C^{\infty}_{\V, \S}
$$ is given by
\begin{equation}
\label{ConjInvFourierTr}
\mathcal F^{-1}_{\V}h:=\sum_{\xi\in\ind}h(\xi)v_{\xi}, \quad h\in\mathcal S(\ind),
\end{equation}
so that the conjugate Fourier inversion formula becomes
\begin{equation}
\label{ConjInvFourierTr0}
f=\sum_{\xi\in\ind}\widehat{f}_{\ast}(\xi)v_{\xi}\quad \textrm{ for all } f\in \C^{\infty}_{\V, \S}.
\end{equation}
By \eqref{EQ: F_vd} the Fourier transforms extend to linear continuous mappings
$ \F_{\U}:\D'_{\U, \S}\to \mathcal S'(\ind)$ and
$ \F_{\V}:\D'_{\V, \S}\to \mathcal S'(\ind)$.
\end{prop}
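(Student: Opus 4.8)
The plan is to prove the bijective homeomorphism property for $\F_{\U}$ directly from the definitions of the seminorms, and then to transfer everything to $\F_{\V}$ by symmetry and to the distribution spaces by duality. The natural seminorms on $\C^{\infty}_{\U, \S}$ are
$$
\|\phi\|_{\U,k} := \sup_{\xi\in\ind}(1+|\lambda_{\xi}|)^{k}\,|(\phi, v_{\xi})|, \quad k\in\mathbb N_{0},
$$
so that, by the very definition of $\C^{k}_{\U, \S}$, an element $\phi$ lies in $\C^{\infty}_{\U, \S}$ precisely when all $\|\phi\|_{\U,k}$ are finite. With this choice the identity $p_{k}(\widehat{\phi}) = \|\phi\|_{\U,k}$ holds for every $k$, where $p_{k}$ are the seminorms on $\mathcal S(\ind)$ from \eqref{EQ:ssp}. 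This simultaneously shows that $\F_{\U}$ maps $\C^{\infty}_{\U, \S}$ into $\mathcal S(\ind)$ and that it is continuous, being an isometry for each matching pair of seminorms.

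For injectivity I would use that if $\widehat{\phi}(\xi) = (\phi, v_{\xi}) = 0$ for all $\xi$, then $\phi = 0$, since $\V$ is the biorthogonal (hence total) system. The heart of the argument is surjectivity together with the inverse formula \eqref{InvFourierTr}. Given $h\in\mathcal S(\ind)$, I would set $f := \sum_{\xi\in\ind} h(\xi) u_{\xi}$ and first verify that this synthesis series converges absolutely in $\Sp$: choosing $M = s_{0}$ in the decay estimate for $h$ and using the uniform bound $\sup_{\xi}\|u_{\xi}\|_{\Sp} < \infty$ together with the summability assumption \eqref{EQ:asl}, one obtains
$$
\sum_{\xi\in\ind} |h(\xi)|\,\|u_{\xi}\|_{\Sp} \leq C\, \Big(\sup_{\xi\in\ind}\|u_{\xi}\|_{\Sp}\Big) \sum_{\xi\in\ind} (1+|\lambda_{\xi}|)^{-s_{0}} < \infty.
$$
By continuity of the inner product and the biorthogonality $(u_{\eta}, v_{\xi}) = \delta_{\eta\xi}$, it then follows that $(f, v_{\xi}) = h(\xi)$ for every $\xi$, which shows both that $\widehat{f} = h$ and that $f \in \C^{\infty}_{\U, \S}$, since $h$ decays faster than any power of $(1+|\lambda_{\xi}|)^{-1}$. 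Thus $\F_{\U}$ is onto with inverse given by \eqref{InvFourierTr}, and the same seminorm identity $\|f\|_{\U,k} = p_{k}(h)$ shows that the inverse is continuous.

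The statement for $\F_{\V}$ is obtained verbatim by interchanging the roles of $\U$ and $\V$; the spectral weights coincide since $|\overline{\lambda_{\xi}}| = |\lambda_{\xi}|$, so the same space $\mathcal S(\ind)$ serves both transforms. Finally, the distributional extensions follow by transposition: since $\F_{\V}^{-1}\colon \mathcal S(\ind) \to \C^{\infty}_{\V, \S}$ is a homeomorphism, its transpose is a continuous linear map between the duals, and formula \eqref{EQ: F_vd} identifies this transpose with $\F_{\U}$ on $\D'_{\U,\S} = (\C^{\infty}_{\V, \S})'$ taking values in $\mathcal S'(\ind)$; symmetrically for $\F_{\V}$ via \eqref{EQ: F_vdv}. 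The polynomial growth bound on $\F_{\U}w$ recorded after \eqref{EQ: F_vdv} is just the quantitative form of the continuity of $w$ as a functional on $\C^{\infty}_{\V, \S}$.

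The main obstacle I expect is the convergence of the synthesis series in the surjectivity step: this is exactly the point where assumption \eqref{EQ:asl} is indispensable, since without some summability of $(1+|\lambda_{\xi}|)^{-s_{0}}$ the series $\sum_{\xi} h(\xi) u_{\xi}$ need not converge in $\Sp$ even for rapidly decaying $h$. Everything else reduces to the formal matching of seminorm families and a routine duality argument.
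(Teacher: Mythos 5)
Your proof is correct and is precisely the argument the paper has in mind: the paper offers no details at all here, stating only that ``the proof is straightforward,'' and your write-up is the natural expansion of that claim. In particular, you correctly identify the three ingredients the paper leaves implicit: the seminorm matching $p_{k}(\widehat{\phi})=\sup_{\xi\in\ind}(1+|\lambda_{\xi}|)^{k}|(\phi,v_{\xi})|$ which gives continuity of $\F_{\U}$ and of its inverse simultaneously, the absolute convergence of the synthesis series $\sum_{\xi\in\ind}h(\xi)u_{\xi}$ via assumption \eqref{EQ:asl} together with the uniform boundedness of the Riesz basis, and the extension to $\D'_{\U,\S}$ and $\D'_{\V,\S}$ by transposing the homeomorphisms $\F_{\V}^{-1}$ and $\F_{\U}^{-1}$ through \eqref{EQ: F_vd} and \eqref{EQ: F_vdv}.
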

%The main object of this paper is a convolution. Here we derive with several types of convolutions, especially, generated by operator $\L$ and systems $\U$ and $\V$. We will compare them and discuss their properties.
The proof is straightforward.

\medskip

Let us formulate the properties of the $\U$- and $\V$-convolutions:

\begin{prop}\label{ConvProp}
For any $f\in \D'_{\U,\S}, g\in\C^{\infty}_{\U, \S}$, $h\in \D'_{\V,\S}, j\in\C^{\infty}_{\V, \S}$ we have
$$
\widehat{f\s_{\U} g}=\widehat{f}\,\widehat{g}, \,\,\,\, \widehat{h\s_{\V} j}_{\ast}=\widehat{h}_{\ast}\,\widehat{j}_{\ast}.
$$
The convolutions are commutative and associative.
If $g\in\C^{\infty}_{\U, \S}$ then for all
$f\in \D'_{\U,\S}$ we have
\begin{equation}\label{EQ:conv1}
f\s_{\U} g\in\C^{\infty}_{\U, \S}.
\end{equation}
\end{prop}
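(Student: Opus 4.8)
The plan is to derive all three assertions from the single Fourier identity $\widehat{f\s_{\U} g}=\widehat{f}\,\widehat{g}$, which transfers the analysis of the convolution to the elementary pointwise product of the Fourier coefficients.

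First I would verify this identity in the present extended setting. For $f\in\D'_{\U,\S}$ the coefficients $\widehat{f}(\xi)=(f,v_\xi)$ grow at most polynomially, $|\widehat{f}(\xi)|\leq C(1+|\lambda_\xi|)^{N}$, whereas for $g\in\C^\infty_{\U,\S}$ the coefficients $\widehat{g}(\xi)$ decay faster than any power of $(1+|\lambda_\xi|)$. Hence $\widehat{f}(\xi)\widehat{g}(\xi)$ decays rapidly, and together with the uniform boundedness of $\U$ and the summability assumption \eqref{EQ:asl} the series defining $f\s_{\U} g$ converges absolutely in $\Sp$. This absolute convergence lets me interchange the inner product $(\,\cdot\,,v_\xi)$ with the summation and use biorthogonality $(u_\eta,v_\xi)=\delta_{\eta\xi}$, exactly as in the computation in Theorem \ref{PR: ConvProp}, to obtain $\widehat{f\s_{\U} g}(\xi)=\widehat{f}(\xi)\widehat{g}(\xi)$; the $\V$-version is identical after swapping $\U$ and $\V$.

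Commutativity and associativity are then immediate, since the pointwise product of sequences is commutative and associative and $\F_{\U}$ is injective on the spaces involved (Proposition \ref{LEM: FTinS}); alternatively one repeats the telescoping identity from Theorem \ref{PR: ConvProp} verbatim. For the mapping property \eqref{EQ:conv1}, I combine the polynomial bound on $\widehat{f}$ with the rapid decay of $\widehat{g}$: for every $k$,
$$
|\widehat{f\s_{\U} g}(\xi)|=|\widehat{f}(\xi)|\,|\widehat{g}(\xi)|\leq CC_{k}(1+|\lambda_\xi|)^{N-k},
$$
so that, $k$ being arbitrary, $\widehat{f\s_{\U} g}\in\mathcal S(\ind)$. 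Proposition \ref{LEM: FTinS} then yields $f\s_{\U} g=\F_{\U}^{-1}(\widehat{f}\,\widehat{g})\in\C^\infty_{\U,\S}$, which is \eqref{EQ:conv1}.

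The only real obstacle is the bookkeeping with the two competing scales — polynomial growth of $\widehat{f}$ against rapid decay of $\widehat{g}$ — and making the termwise application of the functional $(\,\cdot\,,v_\xi)$ rigorous; this is precisely where the summability of $(1+|\lambda_\xi|)^{-s_0}$ in \eqref{EQ:asl} is used to secure absolute convergence of the series in $\Sp$.
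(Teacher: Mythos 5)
Your proof is correct and follows essentially the same route as the paper: the Fourier identity and commutativity/associativity are obtained by the same termwise computation as in Theorem \ref{PR: ConvProp}, and the mapping property \eqref{EQ:conv1} rests on the same estimate (polynomial growth of $\widehat{f}$ beaten by rapid decay of $\widehat{g}$, with \eqref{EQ:asl} securing convergence). The only cosmetic difference is that you conclude via $\widehat{f}\,\widehat{g}\in\mathcal S(\ind)$ and the bijection of Proposition \ref{LEM: FTinS}, whereas the paper phrases the same fact as convergence of $\sum_{\xi\in\ind}\widehat{f}(\xi)\widehat{g}(\xi)\lambda_{\xi}^{k}u_{\xi}$ for all $k$, i.e.\ via the characterisation \eqref{EQ:Cl}.
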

\begin{proof} Since the first part of the statement is proving in the same way as analogous one from Proposition \ref{PR: ConvProp}, we will show only the property \eqref{EQ:conv1} which follows if we observe that for all $k\in\mathbb N_{0}$ the series
$$
\sum_{\xi\in\ind}\widehat{f}(\xi) \widehat{g}(\xi) \lambda_{\xi}^{k}u_{\xi}
$$
converges since $\widehat{g}\in\mathcal S(\ind)$.
\end{proof}

\begin{prop} \label{PR: appl-1}
If $\L:\Sp\to\Sp$ is associated to a pair $(\U, \S)$ then we have
$$
\L(f\s_{\U}g)=(\L f)\s_{\U} g=f\s_{\U} (\L g)
$$
for any $f, g\in\C^{\infty}_{\U, \S}$.
\end{prop}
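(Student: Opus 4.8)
The plan is to transport the whole identity to the Fourier side, where $\L$ becomes multiplication by the sequence $\S$ and the $\U$-convolution becomes a pointwise product; once this is done, all three expressions collapse to one and the same series.

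First I would record the two structural facts that drive the proof. From Proposition~\ref{ConvProp} (the same computation as in Theorem~\ref{PR: ConvProp}) the $\U$-Fourier transform turns the convolution into a product, $\widehat{f\s_{\U}g}(\xi)=\widehat{f}(\xi)\,\widehat{g}(\xi)$. Next I would check that $\L$ acts as a Fourier multiplier, namely
\[
\widehat{\L f}(\xi)=\lambda_{\xi}\widehat{f}(\xi),\qquad \xi\in\ind.
\]
This is immediate from the defining formula \eqref{EQ-1.1}: writing $\L f=\sum_{\eta\in\ind}\lambda_{\eta}\widehat{f}(\eta)u_{\eta}$ and pairing with $v_{\xi}$, the biorthogonality $(u_{\eta},v_{\xi})_{\Sp}=\delta_{\eta\xi}$ collapses the sum to the single surviving term $\lambda_{\xi}\widehat{f}(\xi)$.

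With these two facts the three computations are routine. Applying $\L$ to $f\s_{\U}g$ via the inversion formula \eqref{InvFourierTr0} and using $\widehat{f\s_{\U}g}=\widehat{f}\,\widehat{g}$ gives $\L(f\s_{\U}g)=\sum_{\xi\in\ind}\lambda_{\xi}\widehat{f}(\xi)\widehat{g}(\xi)u_{\xi}$. On the other hand, using the multiplier identity to evaluate $\widehat{\L f}$ and then the definition of the convolution,
\[
(\L f)\s_{\U}g=\sum_{\xi\in\ind}\widehat{\L f}(\xi)\widehat{g}(\xi)u_{\xi}=\sum_{\xi\in\ind}\lambda_{\xi}\widehat{f}(\xi)\widehat{g}(\xi)u_{\xi},
\]
and symmetrically $f\s_{\U}(\L g)=\sum_{\xi\in\ind}\widehat{f}(\xi)\,\lambda_{\xi}\widehat{g}(\xi)u_{\xi}$. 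All three series agree, which is precisely the asserted chain of equalities.

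The only point requiring genuine care — the nearest thing to an obstacle — is well-definedness and the legitimacy of interchanging $\L$ with the convolution sum. Since $f,g\in\C^{\infty}_{\U,\S}$, their coefficients $\widehat{f},\widehat{g}$ lie in $\mathcal S(\ind)$ by Proposition~\ref{LEM: FTinS}, and multiplication by the at most polynomially growing sequence $(\lambda_{\xi})$ preserves $\mathcal S(\ind)$; hence $\L f,\L g\in\C^{\infty}_{\U,\S}$ and the weighted coefficient sequence $\lambda_{\xi}\widehat{f}(\xi)\widehat{g}(\xi)$ is again rapidly decaying. Combined with the uniform boundedness of $\U$ in $\Sp$, this makes each of the series above absolutely convergent in $\Sp$, which both justifies moving $\L$ inside the sum and shows, via \eqref{EQ:Cl}, that every expression indeed belongs to $\C^{\infty}_{\U,\S}$.
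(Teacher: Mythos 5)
Your proof is correct and follows essentially the same route as the paper: both pass to the Fourier side, where $\L$ acts as multiplication by $\lambda_{\xi}$ and $\s_{\U}$ becomes a pointwise product, so all three expressions have the same Fourier coefficients $\lambda_{\xi}\widehat{f}(\xi)\widehat{g}(\xi)$. The only difference is that you spell out the multiplier identity and the convergence justification (rapid decay of $\widehat{f}\,\widehat{g}$ plus uniform boundedness of $\U$), which the paper's two-line proof leaves implicit.
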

\begin{proof}
The proof is valid since the equalities
$$
\F_{\U}(\L(f\s_{\U}g))(\xi)=\lambda_{\xi}\widehat{f}(\xi)\widehat{g}(\xi)
$$
and
$$
\F_{\U}((\L f)\s_{\U} g)(\xi)=\F_{\U}(\L f)(\xi)\widehat{g}(\xi)=\lambda_{\xi}\widehat{f}(\xi)\widehat{g}(\xi)
$$
are true for all $\xi\in\ind$.
\end{proof}

As a small application, let us write the resolvent of the operator $\L$ in terms of the convolution.

\begin{thm}\label{TH: apll-2}
Let $\L:\Sp\to\Sp$ be an operator associated to a pair $(\U, \S)$. Then
the resolvent of the operator $\L$ is given by the formula
$$
\mathcal R(\lambda)f:=(\L-\lambda I)^{-1}f=g_{\lambda}\s_{\U}f,\quad \lambda\not\in\S,
$$
where $I$ is an identity operator in $\Sp$ and
$$
g_{\lambda}=\sum_{\xi\in\ind}\frac{1}{\lambda_{\xi}-\lambda}u_{\xi}.
$$
\end{thm}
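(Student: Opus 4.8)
The plan is to verify the asserted formula by applying the $\U$--Fourier transform to both sides and checking they agree coefficientwise, exploiting the fact (Theorem \ref{PR: ConvProp}) that convolution is turned into the product of Fourier coefficients. First I would compute the $\U$--Fourier coefficients of the candidate $g_{\lambda}$. Since $g_{\lambda}=\sum_{\eta\in\ind}\frac{1}{\lambda_{\eta}-\lambda}u_{\eta}$ is given precisely as its inverse $\U$--Fourier expansion, the biorthogonality $(u_{\eta},v_{\xi})_{\Sp}=\delta_{\xi\eta}$ immediately yields $\widehat{g_{\lambda}}(\xi)=(g_{\lambda},v_{\xi})_{\Sp}=\frac{1}{\lambda_{\xi}-\lambda}$ for each $\xi\in\ind$.

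Next I would apply the convolution property. By Theorem \ref{PR: ConvProp} we have $\widehat{g_{\lambda}\s_{\U}f}(\xi)=\widehat{g_{\lambda}}(\xi)\,\widehat{f}(\xi)=\frac{1}{\lambda_{\xi}-\lambda}\widehat{f}(\xi)$, so by the Fourier inversion formula the candidate resolvent is
\begin{equation}\label{EQ:resolv-expand}
g_{\lambda}\s_{\U}f=\sum_{\xi\in\ind}\frac{1}{\lambda_{\xi}-\lambda}\,\widehat{f}(\xi)\,u_{\xi}.
\end{equation}
It then remains to confirm that the operator sending $f$ to the right-hand side of \eqref{EQ:resolv-expand} is indeed the inverse of $\L-\lambda I$. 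Using the definition \eqref{EQ-1.1} of $\L$ together with $\L u_{\xi}=\lambda_{\xi}u_{\xi}$, I would apply $\L-\lambda I$ termwise to \eqref{EQ:resolv-expand}: each summand $\frac{1}{\lambda_{\xi}-\lambda}\widehat{f}(\xi)u_{\xi}$ is an eigenvector, so $(\L-\lambda I)$ multiplies it by $\lambda_{\xi}-\lambda$, giving back $\sum_{\xi\in\ind}\widehat{f}(\xi)u_{\xi}=f$ by the inversion formula \eqref{InvFourierTr0}. The reverse composition is identical since everything is diagonal in the $\{u_{\xi}\}$ basis. Thus $(\L-\lambda I)\mathcal R(\lambda)f=\mathcal R(\lambda)(\L-\lambda I)f=f$.

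The main obstacle is not the algebra but the convergence and domain bookkeeping. One must justify that $g_{\lambda}$ defines a legitimate element on which the convolution is well-defined, which is where the spectral gap condition $\lambda\notin\S$ enters: it guarantees $|\lambda_{\xi}-\lambda|$ is bounded below away from zero along the sequence, so the multiplier $\frac{1}{\lambda_{\xi}-\lambda}$ grows at most polynomially and $g_{\lambda}$ belongs to $\D'_{\U,\S}$, making $g_{\lambda}\s_{\U}f$ meaningful via part (iii) of the constructions for suitable $f$. I would therefore note explicitly that, for $f\in\C^{\infty}_{\U,\S}$, the rapid decay of $\widehat{f}\in\mathcal S(\ind)$ combined with the polynomial control on $\frac{1}{\lambda_{\xi}-\lambda}$ ensures the series \eqref{EQ:resolv-expand} converges in $\Sp$ (indeed lands back in $\C^{\infty}_{\U,\S}$ by Proposition \ref{ConvProp}), and that the termwise action of $\L-\lambda I$ is legitimate on this domain. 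With these convergence points settled, the identity follows directly from the diagonalisation carried out above.
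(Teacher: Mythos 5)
Your proof is correct, and its core is the same diagonal computation as in the paper: both identify $\widehat{g_{\lambda}}(\xi)=\frac{1}{\lambda_{\xi}-\lambda}$ and hence $g_{\lambda}\s_{\U}f=\sum_{\xi\in\ind}\frac{1}{\lambda_{\xi}-\lambda}\widehat{f}(\xi)u_{\xi}$. Where you diverge is in how this series gets recognised as $(\L-\lambda I)^{-1}f$. The paper rewrites each term as $\widehat{f}(\xi)(\L-\lambda I)^{-1}u_{\xi}$ and then interchanges the resolvent with the infinite sum, appealing to ``the continuity of the resolvent''; this presupposes that $(\L-\lambda I)^{-1}$ exists and is bounded, which is never verified there (it does hold: assumption \eqref{EQ:asl} forces $|\lambda_{\xi}|\to\infty$, so $\lambda\notin\S$ gives $\inf_{\xi\in\ind}|\lambda_{\xi}-\lambda|>0$, and the Riesz basis property then makes the coefficient multiplier bounded on $\Sp$). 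You instead apply $\L-\lambda I$ coefficientwise --- legitimate, since $\L$ acts coefficientwise by its very definition \eqref{EQ-1.1} --- and check that both compositions give the identity, thereby \emph{exhibiting} $g_{\lambda}\s_{\U}\,\cdot$ as a two-sided inverse rather than assuming an inverse exists; this is more self-contained, at the price of restricting to $f\in\C^{\infty}_{\U,\S}$. That restriction, and your convergence bookkeeping generally, is a genuine addition: the coefficients $\frac{1}{\lambda_{\xi}-\lambda}$ are bounded but need not be square-summable, so $g_{\lambda}$ is in general only an element of $\D'_{\U,\S}$ and not of $\Sp$, and the convolution must then be read in the distributional sense of part (iii) of Section 4, which is exactly what forces one factor to lie in $\C^{\infty}_{\U,\S}$ --- a point the paper passes over in silence. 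One small correction to your write-up: the uniform gap $\inf_{\xi\in\ind}|\lambda_{\xi}-\lambda|>0$ does not follow from $\lambda\notin\S$ alone (the $\lambda_{\xi}$ could a priori accumulate at $\lambda$); it also uses that \eqref{EQ:asl} forces $|\lambda_{\xi}|\to\infty$, so you should invoke that assumption explicitly rather than attributing the gap to $\lambda\notin\S$ by itself.
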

\begin{proof} Begin by calculating the following series
\begin{align*}
g_{\lambda}\s_{\U}f&=\sum_{\xi\in\ind}\frac{1}{\lambda_{\xi}-\lambda} \widehat{f}(\xi) u_{\xi}\\
&=\sum_{\xi\in\ind}\widehat{f}(\xi)(\L-\lambda I)^{-1}u_{\xi}\\
&=(\L-\lambda I)^{-1}\left(\sum_{\xi\in\ind}\widehat{f}(\xi)u_{\xi}\right)\\
&=(\L-\lambda I)^{-1}f\\
&=\mathcal R(\lambda)f,
\end{align*}
where we used the continuity of the resolvent.
Now the theorem is proved.
\end{proof}

\section{Examples}

We give an example considered in \cite{RT16} that can be also considered as an extension setting in an appropriate sense of the toroidal calculus studied
in \cite{Ruzhansky-Turunen-JFAA-torus}.

Let the operator ${\rm O}_{h}^{(1)}:L^{2}(0,1)\to L^{2}(0,1)$ be given by the action
$$
{\rm O}_{h}^{(1)}:= -i\frac{d}{d x},
$$
where $h>0$, on the domain $(0, 1)$ with the boundary condition $h y(0)=y(1).$
In the case $h=1$ we have ${\rm O}_{1}^{(1)}$ with
periodic boundary conditions, and
the systems $\U$ and $\V$ of eigenfunctions of ${\rm O}_{1}^{(1)}$ and its adjoint
${{\rm O}_{1}^{(1)}}^{*}$ coincide, and are given by
$$
\U=\V=\{u_{j}(x)=e^{ 2\pi i x j },\,\, j\in \mathbb{Z}\}.
$$
This leads to the setting of the classical Fourier analysis on the circle which can be viewed as the interval $(0,1)$ with periodic boundary conditions.
The corresponding pseudo-differential calculus
was consistently developed in \cite{Ruzhansky-Turunen-JFAA-torus} building on
previous observations in the works by
Agranovich \cite{agran, agran2} and others.

For $h\not=1$, the operator ${\rm O}_{h}^{(1)}$  is not
self-adjoint. The spectral properties of ${\rm
O}_{h}^{(1)}$ are well-known (see Titchmarsh \cite{titc} and Cartwright \cite{cart}),
the spectrum of ${\rm O}_{h}^{(1)}$ is discrete and is given by
$\lambda_{j}=-i\ln h+2j\pi, \ j\in \mathbb{Z}.$
The corresponding bi-orthogonal families of eigenfunctions of ${\rm O}_{h}^{(1)}$ and its
adjoint are given by
$$
\U=\{u_{j}(x)=h^{x}e^{ 2\pi i x j },\,\, j\in \mathbb{Z}\}
$$
and
$$
\V=\{v_{j}(x)=h^{-x} e^{2\pi i x j },\,\, j\in \mathbb{Z}\},
$$
respectively. They form Riesz bases, and ${\rm O}_{h}^{(1)}$ is the operator associated to the pair $\U$ and  $\S=\{\lambda_{j}=-i\ln h+2j\pi\}_{j\in \mathbb{Z}}$.

Since $\mathbb N$ denoted an arbitrary discrete set before, all the previous constructions work with $\mathbb Z$ instead of $\mathbb N$.

Formally, we can write
\begin{equation}
\label{CONV} (f\s_{\U}
g)(x)=\int\int F(x,y,z)f(y)g(z)dydz,
\end{equation}
where
%$\L$ corresponds to the pair $(\U, \S)$
$$
F(x,y,z)=\sum_{\xi\in\ind}u_{\xi}(x) \ \overline{v_{\xi}(y)}
\ \overline{v_{\xi}(z)}.
$$
Here integrals \eqref{CONV} and the last series should be understood in the sense of distributions.
In the case $h=1$, it can be shown that
$F(x,y,z)=\delta(x-y-z)$, see \cite{Ruzhansky-Turunen-JFAA-torus}.

For any $h>0$, it can be shown that the $\U$--convolution
coincides with Kanguzhin's convolution that was studied in
\cite{Kanguzhin_Tokmagambetov} and \cite{Kanguzhin_Tokmagambetov_Tulenov}:

\begin{prop}\label{prop:Kc}
Let $\Sp=L^{2}(0,1)$, $\U=\{u_{j}(x)=h^{x}e^{ 2\pi i x j },\,\, j\in \mathbb{Z}\}$, and
$\S=\{\lambda_{j}=-i\ln h+2j\pi\}_{j\in \mathbb{Z}}$. Then the operator $\L:L^{2}(0,1) \to L^{2}(0,1)$
associated to the pair $(\U, \S)$ coincides with ${\rm O}_{h}^{(1)}$.
The corresponding $\U$-convolution can be written in the integral form:
$$
(f\star_{\U}
g)(x)=\int^{x}_{0}f(x-t)g(t)dt+\frac{1}{h}\int^{1}_{x}f(1+x-t)g(t)dt.
$$
\end{prop}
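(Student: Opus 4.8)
The plan is to compute the convolution kernel $F(x,y,z)=\sum_{\xi\in\ind}u_{\xi}(x)\,\overline{v_{\xi}(y)}\,\overline{v_{\xi}(z)}$ explicitly for the given systems $\U$ and $\V$, and then to collapse the resulting double integral into the claimed one-dimensional expression. First I would substitute $u_{j}(x)=h^{x}e^{2\pi i x j}$ and $v_{j}(y)=h^{-y}e^{2\pi i y j}$ (so $\overline{v_{j}(y)}=h^{-y}e^{-2\pi i y j}$) into the definition. Pulling out the $h$-powers, the kernel becomes
\begin{equation*}
F(x,y,z)=h^{x}h^{-y}h^{-z}\sum_{j\in\mathbb Z}e^{2\pi i (x-y-z)j}=h^{x-y-z}\sum_{j\in\mathbb Z}e^{2\pi i (x-y-z)j}.
\end{equation*}
The sum is the classical Dirac comb: $\sum_{j\in\mathbb Z}e^{2\pi i t j}=\sum_{n\in\mathbb Z}\delta(t-n)$. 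Hence $F(x,y,z)=h^{x-y-z}\sum_{n\in\mathbb Z}\delta(x-y-z-n)$, exactly the $h$-weighted analogue of the $h=1$ case where $F=\delta(x-y-z)$.

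Next I would insert this kernel into the distributional integral \eqref{CONV} and restrict the summation to the values of $n$ that matter. Since $f,g$ are supported on $(0,1)$, we have $t:=z\in(0,1)$ and $y\in(0,1)$, so for fixed $x\in(0,1)$ the equation $x-y-z=n$ can only be satisfied for $n=0$ (giving $y=x-z$, valid when $0<z<x$) or $n=-1$ (giving $y=x-z+1=1+x-z$, valid when $x<z<1$). Integrating out the $y$-variable against the two delta contributions, and writing $t$ in place of $z$, produces
\begin{equation*}
(f\star_{\U}g)(x)=h^{x-(x-t)-t}\int_{0}^{x}f(x-t)g(t)\,dt+h^{x-(1+x-t)-t}\int_{x}^{1}f(1+x-t)g(t)\,dt.
\end{equation*}
The exponents simplify: in the first integral $x-(x-t)-t=0$, giving weight $h^{0}=1$; in the second $x-(1+x-t)-t=-1$, giving weight $h^{-1}=1/h$. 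This yields precisely the asserted formula.

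The routine part is the bookkeeping of which integer shifts $n$ land in the support; the only genuine subtlety is making the formal manipulations with the Dirac comb rigorous in the distributional pairing, i.e. justifying that the series for $F$ converges in $\D'_{\U,\S}\otimes$ test-function sense and that the interchange of summation and integration is legitimate. For that I would appeal to the fact established earlier that $\U$ is a Riesz basis and that the convolution is already known to be a well-defined bilinear map on the relevant spaces (Proposition \ref{PROP:wd} and its distributional extension in part (iii) of the previous section), so the explicit integral representation merely identifies the already-defined object; one can either test both sides against $v_{\xi}$ and verify $\widehat{(f\star_{\U}g)}(\xi)=\widehat{f}(\xi)\widehat{g}(\xi)$ using Theorem \ref{PR: ConvProp}, or argue directly by approximating $f,g$ by finite linear combinations of the $u_{\xi}$. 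The identification $\L={\rm O}_{h}^{(1)}$ is immediate from Definition \ref{DEF:L}, since $\L u_{j}=\lambda_{j}u_{j}$ with $\lambda_{j}=-i\ln h+2\pi j$ matches the eigenvalue equation $-i\,u_{j}'=\lambda_{j}u_{j}$ together with the boundary condition $h\,u_{j}(0)=u_{j}(1)$, so this part requires only a direct verification.
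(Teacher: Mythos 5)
Your proof is correct, but it runs in the opposite direction from the paper's. The paper starts from the claimed integral expression, calls it $K(f,g)$, and computes its $\U$--Fourier transform directly: it splits the double integral, substitutes $z=x-t$ (resp.\ $z=1+x-t$) in the two pieces, and recombines them into $\left[\int_0^1 f(z)h^{-z}e^{-2\pi i z\xi}\,dz\right]\widehat{g}(\xi)$, so that $\widehat{K(f,g)}(\xi)=\widehat{f}(\xi)\,\widehat{g}(\xi)$; the uniqueness part of Theorem \ref{PR: ConvProp} then forces $K(f,g)=f\star_{\U}g$. You instead derive the formula from the kernel representation \eqref{CONV}, computing $F(x,y,z)=h^{x-y-z}\sum_{n}\delta(x-y-z-n)$ via the Dirac comb and collapsing the two contributing shifts $n=0$ and $n=-1$; your bookkeeping of the supports and of the $h$-exponents is accurate, and only these two shifts can occur since $x-y-z\in(-2,1)$. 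What your route buys is discoverability: it exhibits the formula as an $h$-weighted periodization and would let one find it without knowing it in advance. What the paper's route buys is rigor at no extra cost: it needs only Fubini and changes of variables, with no distributional justification of the comb identity or of the termwise integration against it. Note that the rigorous fallback you yourself propose --- testing against $v_{\xi}$ to verify $\widehat{f\star_{\U}g}=\widehat{f}\,\widehat{g}$ and then invoking Theorem \ref{PR: ConvProp} --- is precisely the paper's proof, so once made airtight your argument passes through the same computation. Your treatment of the identification $\L={\rm O}_{h}^{(1)}$, checking $-iu_{j}'=\lambda_{j}u_{j}$ and $hu_{j}(0)=u_{j}(1)$ and appealing to Definition \ref{DEF:L}, is reasonable; the paper asserts this part without giving a separate argument.
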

In particular, when $h=1$, we obtain
$$
(f\star_{\U}
g)(x)=\int_{0}^{1}f(x-t)g(t)dt.
$$
is the usual convolution on the circle.

\begin{proof}[Proof of Proposition \ref{prop:Kc}]
Let us denote
$$K(f,g)(x):=\int^{x}_{0}f(x-t)g(t)dt+\frac{1}{h}\int^{1}_{x}f(1+x-t)g(t)dt.
$$
Then we can calculate
\begin{align*}
&\F_{\U}(K(f,g))(\xi) \\ =&\int_{0}^{1}\int^{x}_{0}f(x-t)g(t)h^{-x} e^{-2\pi i x \xi }dtdx\\
&+\frac{1}{h}\int_{0}^{1}\int^{1}_{x}f(1+x-t)g(t)h^{-x} e^{-2\pi i x \xi }dtdx \\
=&\int_{0}^{1}\left[\int^{1}_{t}f(x-t)h^{-x} e^{-2\pi i x \xi }dx\right]g(t)dt\\
&+\int_{0}^{1}\left[\int^{t}_{0}f(1+x-t)h^{-(1+x)} e^{-2\pi i (1+x) \xi }dx\right]g(t)dt
%\end{align*}
%\begin{align*}
%\,\,\,\,\,\,\,\,\,\,\,\,\,\,\,\,\,\,\,\,\,\,\,\,\,\,\,\,\,\,\,\,\,\,
%\,\,\,\,\,\,\,\,\,\,\,\,\,\,\,\,\,\,\,\,\,\,\,\,\,\,\,\,\,\,\,\,\,\,\,\,\,
\\
=&\int_{0}^{1}\left[\int^{1}_{t}f(x-t)h^{-(x-t)} e^{-2\pi i (x-t) \xi }dx\right]g(t)h^{-t} e^{-2\pi i t \xi }dt\\
&+\int_{0}^{1}\left[\int^{t}_{0}f(1+x-t)h^{-(1+x-t)} e^{-2\pi i (1+x-t) \xi }dx\right]g(t)h^{-t} e^{-2\pi i t \xi }dt
\end{align*}
\begin{align*}
\,\,\,\,\,\,\,\,\,\,\,\,\,\,\,\,\,\,\,\,\,\,\,\,\,\,\,\,\,\,\,\,\,\,
\,\,\,\,\,\,\,\,\,\,\,\,\,\,\,\,\,\,\,\,\,
=&\int_{0}^{1}\left[\int^{1-t}_{0}f(z)h^{-z} e^{-2\pi i z \xi }dz\right]g(t)h^{-t} e^{-2\pi i t \xi }dt\\
&+\int_{0}^{1}\left[\int^{1-t}_{1}f(z)h^{-z} e^{-2\pi i z \xi }dx\right]g(t)h^{-t} e^{-2\pi i t \xi }dt
\end{align*}
\begin{align*}
\,\,\,\,\,\,\,\,\,\,\,\,\,\,\,\,\,\,\,\,\,\,\,\,\,\,\,\,\,\,\,\,\,\,
\,\,\,\,\,\,\,\,\,\,\,\,\,\,\,\,
=&\int_{0}^{1}\left[\int^{1}_{0}f(z)h^{-z} e^{-2\pi i z \xi }dz\right]g(t)h^{-t} e^{-2\pi i t \xi }dt\\
=&\widehat{f}(\xi)\,\widehat{g}(\xi).
\end{align*}
Consequently, by Theorem \ref{PR: ConvProp}, we obtain that
$K(f,g)=f*_{\U}g$.
\end{proof}

% $$F(x,y,z)=\sum_{\xi\in\mathbb Z}u_{\xi}(x) \ \overline{v_{\xi}(y)}\ \overline{v_{\xi}(z)}=\sum_{\xi\in\mathbb Z} h^{x-y-z}e^{ 2\pi i (x-y-z) \xi }$$

\section{Further discussion}

We note that in the case when we are given an operator $\L:\Sp\to\Sp$ for which the eigenfunctions do not make a basis of $\Sp$, other notions of convolutions are possible,
still satisfying an analogue of Proposition \ref{PR: appl-1}.
Here we can set up a convolution using the characterisation of the space of test functions given by \eqref{EQ:Cl}.

\begin{defi}\label{DEF: L-oper} Let $\L:\Sp\to\Sp$ be a linear densely defined operator in $\Sp$.
Denote $\Dom(\L^{\infty}):=\bigcap_{k\in\mathbb N_{0}}\Dom(\L^{k})$ with $\Dom(\L^{k}):=\{f: \,\, \L^{i}f\in\Sp, \, i=2, ... , k-1 \}.$
We say that  a bilinear associative and commutative operation $*_{\L}$ is an $\L$-convolution if
for any $f,g\in\Dom(\L^{\infty})$
we have
$$
\L(f\s_{\L}g)=(\L f)\s_{\L}g=f\s_{\L}(\L g).
$$
\end{defi}

Proposition \ref{PR: appl-1} implies that $\U$-convolution is a special case of $\L$-convolutions:

\begin{cor}
Assume that $\L:\Sp\to\Sp$ is an operator associated with the pair $(\U, \S)$, where $\U$ is a Riesz basis in $\Sp$. Then the $\U$--convolution $\s_{\U}$ is an $\L$-convolution.
\end{cor}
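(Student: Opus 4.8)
The plan is to read the corollary off directly from three facts already in hand: the algebraic structure of $\s_{\U}$ recorded in Theorem \ref{PR: ConvProp}, the intertwining identity of Proposition \ref{PR: appl-1}, and the identification of $\C^{\infty}_{\U,\S}$ with the smooth domain of $\L$ in \eqref{EQ:Cl}. Since Definition \ref{DEF: L-oper} demands only that $\s_{\U}$ be bilinear, commutative, associative, and satisfy $\L(f\s_{\L}g)=(\L f)\s_{\L}g=f\s_{\L}(\L g)$ on $\Dom(\L^{\infty})$, it is enough to check each clause for $\s_{\U}$.

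First I would observe that $\s_{\U}$ is bilinear by its defining series \eqref{EQ:c1}, while commutativity and associativity are exactly the content of Theorem \ref{PR: ConvProp}. The substantive point is the intertwining relation. Here I would invoke \eqref{EQ:Cl}, namely $\C^{\infty}_{\U,\S}=\bigcap_{k\in\mathbb N_{0}}\Dom(\L^{k})$; comparing this with the definition of $\Dom(\L^{\infty})$ in Definition \ref{DEF: L-oper} shows that, for the operator $\L$ associated to the pair $(\U,\S)$, one has $\Dom(\L^{\infty})=\C^{\infty}_{\U,\S}$. Consequently the hypothesis $f,g\in\Dom(\L^{\infty})$ of Definition \ref{DEF: L-oper} coincides with the hypothesis $f,g\in\C^{\infty}_{\U,\S}$ of Proposition \ref{PR: appl-1}, and that proposition delivers $\L(f\s_{\U}g)=(\L f)\s_{\U}g=f\s_{\U}(\L g)$ verbatim.

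The only place requiring care — what I would flag as the main, albeit minor, obstacle — is confirming that every expression is well-defined, i.e. that $f\s_{\U}g$ itself lies in $\Dom(\L)$ so that the outer action $\L(f\s_{\U}g)$ makes sense. This closure is supplied by Proposition \ref{ConvProp}: for $g\in\C^{\infty}_{\U,\S}$ and $f\in\C^{\infty}_{\U,\S}\subset\D'_{\U,\S}$ one has $f\s_{\U}g\in\C^{\infty}_{\U,\S}=\Dom(\L^{\infty})\subset\Dom(\L)$, so the defining series of $\L(f\s_{\U}g)$ converges and all three members of the identity are meaningful. With this in place, $\s_{\U}$ meets every requirement of Definition \ref{DEF: L-oper} and is therefore an $\L$-convolution, which is the assertion of the corollary.
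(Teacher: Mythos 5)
Your proof is correct and follows essentially the same route as the paper, which derives the corollary immediately from Proposition \ref{PR: appl-1} (the intertwining identity on $\C^{\infty}_{\U,\S}$) combined with the commutativity and associativity from Theorem \ref{PR: ConvProp} and the identification $\C^{\infty}_{\U,\S}=\bigcap_{k}\Dom(\L^{k})=\Dom(\L^{\infty})$ from \eqref{EQ:Cl}. Your additional check that $f\s_{\U}g$ lies in $\Dom(\L)$, via Proposition \ref{ConvProp}, is a sensible detail that the paper leaves implicit.
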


We finally show that an $\L$-convolution does not have to be a $\U$-convolution for any choice of the set $\S$.

For this, let us consider an $\L$-convolution associated to the so-called Ionkin operator considered in \cite{I77}.
The Ionkin operator $\Y:\Sp\to\Sp$ is the operator in  $\Sp:=L^{2}(0, 1)$ generated by the differential expression
$$
-\frac{d^{2}}{d x^{2}}, \,\,\, x\in(0, 1),
$$
with the boundary conditions
$$
u(0)=0, \,\,\, u'(0)=u'(1).
$$
It has eigenvalues
$$
\lambda_{\xi}=(2\pi \xi)^{2},\; \xi\in\mathbb Z_{+},
$$
and an extended set of eigenfunctions
$$
u_{0}(x)=x, \,\,\, u_{2\xi-1}(x)=\sin(2\pi \xi x), \,\,\, u_{2\xi}(x)=x\cos(2\pi \xi x), \,\, \xi\in\mathbb N,
$$
which give a basis in $L^{2}(0, 1)$, which we denote by $\U$. The corresponding biorthogonal basis is given by
$$
v_{0}(x)=2, \,\,\, v_{2\xi-1}(x)=4(1-x)\sin(2\pi \xi x), \,\,\, v_{2\xi}(x)=4\cos(2\pi \xi x), \,\, \xi\in\mathbb N,
$$
for more details, see \cite{I77}. We consider the $\Y$--convolution (Ionkin--Kanguzhin's convolution) given by the formula
\begin{multline}
f\s_{\Y}g(x):=\frac{1}{2}\int_{x}^{1}f(1+x-t)g(t)dt\\
+\frac{1}{2}\int_{1-x}^{1}f(x-1+t)g(t)dt+\int_{0}^{x}f(x-t)g(t)dt\\
-\frac{1}{2}\int_{0}^{1-x}f(1-x-t)g(t)dt+\frac{1}{2}\int_{0}^{x}f(1+t-x)g(t)dt.
\end{multline}
This is a $\Y$-convolution  in the sense of
Definition \ref{DEF: L-oper}, namely, it satisfies
$$
\Y(f\s_{\Y}g)=(\Y f)\s_{\Y}g=f\s_{\Y}(\Y g),
$$
see \cite{KT15}.
For the collection
$$
\U:=\{u_{\xi}: \,\,\, u_{0}(x)=x, \, u_{2\xi-1}(x)=\sin(2\pi \xi x), \,\, u_{2\xi}(x)=x\cos(2\pi \xi x), \,\, \xi\in\mathbb N\},
$$
it can be readily checked that the corresponding $\U$-Fourier transform satisfies
$$
\widehat{f\s_{\Y}g}(0)=\widehat{f}(0)\widehat{g}(0),
$$
$$
\widehat{f\s_{\Y}g}(2\xi)=\widehat{f}(2\xi)\widehat{g}(2\xi),
$$
$$
\widehat{f\s_{\Y}g}(2\xi-1)=\widehat{f}(2\xi-1)\widehat{g}(2\xi)+\widehat{f}(2\xi)\widehat{g}(2\xi)
+\widehat{f}(2\xi)\widehat{g}(2\xi-1), \,\, \xi\in\mathbb N.
$$
Therefore, by Theorem \ref{PR: ConvProp}, the $\Y$--convolution (Ionkin--Kanguzhin convolution) does not coincide with
the
$\U$--convolution for any choice of numbers $\S$.

\section{Acknowledgment}

The authors thank Professor Baltabek Kanguzhin for interesting discussions, and the referee for useful remarks.

\end{document}